\documentclass[12pt]{amsart}
\usepackage{graphicx} 
\usepackage[utf8]{inputenc}
\usepackage{geometry}\geometry{margin=1in}
\usepackage{amssymb}
\usepackage{indentfirst,amscd}
\usepackage{amsthm, amsfonts, empheq}
\usepackage{mathtools}
\usepackage{enumitem}
\usepackage{tikz-cd} 
\usepackage{adjustbox}
\usepackage[style=alphabetic, giveninits=true]{biblatex}
\addbibresource{main.bib}
\usepackage{thmtools}
\usepackage{thm-restate}
\usepackage{hyperref}

\theoremstyle{definition}
\newtheorem{Def}{Definition}[section]

\theoremstyle{plain}
\declaretheorem[name=Theorem, numberwithin=section]{Theo}
\declaretheorem[name=Lemma, numberwithin=section]{Lemma}
\declaretheorem[name=Proposition, numberwithin=section]{Prop}
\declaretheorem[name=Corollary, numberwithin=section]{Cor}

\theoremstyle{remark}
\newtheorem{Rem}{Remark}[section]
\newtheorem{Example}{Example}[section]

\newcommand{\g}{\mathfrak{g}}
\newcommand{\h}{\mathfrak{h}}
\newcommand{\dtheta}{d_{\theta}}
\newcommand{\fg}{\pi_1}

\DeclareMathOperator{\Tr}{Tr}
\DeclareMathOperator{\Aut}{Aut}
\DeclareMathOperator{\Ric}{Ric}
\DeclareMathOperator{\Scal}{Scal}
\DeclareMathOperator{\Id}{Id}

\DeclareMathOperator{\diag}{diag}
\newcommand{\Z}{\mathbb{Z}}
\newcommand{\R}{\mathbb{R}}
\newcommand{\N}{\mathbb{N}}

\title[Vaisman Solvmanifolds as Finite Quotients]{Vaisman Solvmanifolds as Finite Quotients of Kodaira-Thurston Nilmanifolds}
\author[Lucas H. S. Gomes]{Lucas H. S. Gomes}
\address{Department of Mathematics, Graduate School of Science, The University of Osaka, Osaka, Japan}
\email{u788855c@ecs.osaka-u.ac.jp}

\begin{document}
\begin{abstract}
    We prove that every Vaisman solvmanifold is a finite quotient of a Kodaira–Thurston manifold. More generally, we show that every aspherical compact Vaisman manifold with strongly polycyclic fundamental group is a finite quotient of a Kodaira–Thurston manifold. As consequences, we obtain that every completely solvable solvmanifold admitting a Vaisman structure is a Kodaira–Thurston manifold, that Oeljeklaus–Toma manifolds admit no Vaisman structures (not necessarily left-invariant), and that solvmanifolds does not admit LCK Einstein–Weyl structures.
\end{abstract}
\maketitle

\tableofcontents

\section{Introduction}

Let $G$ be a Lie group and $\Gamma \leq G$ be a lattice, i.e., $\Gamma$ is a discrete cocompact subgroup of $G$. The quotient $\Gamma \backslash G$ is called a solvmanifold (nilmanifold) if $G$ is a solvable (nilpotent) Lie group. A Kodaira-Thurston manifold is a nilmanifold with associated Lie group $\R \times H(1,n)$, where $H(1,n)$ is the Heisenberg group of dimension $2n+1$ (Definition \ref{HeinsbrgDef}). 

A classical problem in the literature is to classify geometric structures in solvmanifolds. One can study the case where the structure is left-invariant with respect to the associated Lie group $G$, or work with the general case when the structure is not necessarily left-invariant. Examples of such approach are given by Hasegawa's Theorem in \cite{Hasegawa} where he shows that every solvmanifold that admits a K\"ahler structure is the finite quotient of a complex torus. Another example is given by Kasuya in \cite{Kasuya1} where he shows that every solvmanifold that admits a Sasakian structure is a finite quotient of a Heisenberg nilmanifold, that is, $G$ is the Heisenberg group $H(1,n)$. Both results rely on the properties of the fundamental group of the solvmanifold. In this work, we focus our attention to the Locally Conformal K\"ahler case.

Let $(M^n, J, \omega)$ be a Hermitian manifold with complex structure $J$ and fundamental form $\omega$ with $n \geq 4$. 
If there exists a closed 1-form $\theta$ such that $d\omega = \theta \wedge \omega$, then the triple $(J, \omega, \theta)$ is called a Locally Conformally K\"ahler (LCK) structure on $M$. 
We call this structure Vaisman if $\theta$ is parallel with respect to the Levi-Civita connection, $\nabla^g \theta = 0$.

For the general classification of LCK structures on solvmanifolds, we can consider a combination of the cases where some element of the triple is or is not left-invariant. Initially, Ugarte in \cite{UgarteMain} showed that, in real dimension at most 6, every nilmanifold with an LCK structure whose complex structure $J$ is left-invariant has its associated Lie group $G$ isomorphic to $\R \times H(1,n)$. Ugarte then conjectured that such result should also be true in any dimension, which was proven by Sawai in \cite{Sawai1}. Sawai later extended the result in \cite{Sawai2} for the Vaisman case, showing that a completely solvable solvmanifold with a Vaisman structure whose $J$ is left-invariant has its associated Lie group $G$ isomorphic to $\R \times H(1,n)$.

At this point, one could hope that for $G$ nilpotent, even if $J$ is not left-invariant it would still be true that $G$ should be isomorphic to $\R \times H(1,n)$. Bazzoni showed in \cite{BazzoniMain} that this is true when the structure is Vaisman. For the case where the structure is in general only LCK, the problem is still open. 

When $G$ is solvable, even if $(J, \omega, \theta)$ is left-invariant and Vaisman, it has been proved that $G$ is not isomorphic to $\R\times H(1,n)$ in general. Andrada and Origlia in \cite{AndradaOriglia} completely characterized the solvmanifolds in which the LCK structure $(J, \omega, \theta)$ is Vaisman and left-invariant. They used such characterization to construct a family of examples in which $G$ was not isomorphic to $\R\times H(1,n)$. 

Inspired by the previously mentioned works of Hasegawa and Kasuya, we prove an analogue of their results for the case of Vaisman solvmanifolds, extending the works of Bazzoni and Sawai for the case where $J$ is not left-invariant, which is the main result of this paper.

\begin{restatable*}{Theo}{MainTheo}\label{MainTheorem}
	Let $M = \Gamma\backslash G$ be a solvmanifold. If $M$ admits a Vaisman structure $(J, \omega, \theta)$, then $M$ is diffeomorphic to a finite quotient of a Kodaira-Thurston manifold $N = (\Z \times \Lambda)\backslash(\R \times H_{2n+1})$, for some $\Lambda \leq \Gamma$. 
	
	More precisely, we have a smooth finite covering $p : N \to M$ in the following diagram.
	\[
	\begin{tikzcd}[ampersand replacement=\&]
		G \arrow[rd] \arrow[rdd, "\pi"', bend right] \&                                               \& {\mathbb{R}\times H_{2n+1}} \arrow[ld] \\
		\&  N \arrow[d, "p"] \&                                           \\
		\& \Gamma \backslash G                                    \&                                          
	\end{tikzcd}
	\]
\end{restatable*}  

We are thus able to directly generalize Bazzoni's result in \cite{BazzoniMain} to the completely solvable case.

\begin{restatable*}{Cor}{ComplSolv}
	Let $M = \Gamma \backslash G$ be a completely solvable solvmanifold admitting a Vaisman structure $(J, \omega, \theta)$. Then $G$ is isomorphic to $\R \times H(1,n)$.
\end{restatable*}

More generally, one can extend the main result to the case in which $M$ admits a LCK structure with potential. This is easily done, because the structure is not fixed in the proof of Theorem \ref{MainTheorem}, all one needs is that $M$ admits a Vaisman structure, and by a deformation result of LCK structure with potential in \cite{OrneaVerbitsky}, one can deform the structure to obtain a Vaisman structure. The details are given in section 4. The proof also doesn't depend strongly on the fact that $M$ has the structure of a solvmanifold, except for the fact that $M$ is, in particular, aspherical and has a strongly polycyclic fundamental group. This is due to our usage of a result from the work of de Nicola and Yudin in \cite{Yudin2}, where they proved that a compact aspherical Sasakian manifold with nilpotent fundamental group is diffeomorphic to a nilmanifold with $H(1,n)$ as the associated Lie group. Therefore, we obtained a Vaisman analogue of this result in Theorem \ref{AsphericalVaisman}.

Another consequence is that every Oeljeklaus-Toma manifold, as originally defined in \cite{OT}, does not admit any Vaisman structure. The case where $J$ is left-invariant was previously solved by Kasuya in \cite{KasuyaOT}. For $J$ not necessarily left-invariant, the result is due to the main theorem, which imposes that every Vaisman solvmanifold contains a nilpotent sublattice in the Lie group $G$, as shown in Proposition \ref{OT}.

Finally, using a topological restriction of LCK Einstein-Weyl manifolds, we can apply the main theorem to show that no solvmanifold can admit a LCK Einstein-Weyl structure, which is done in section 5.

\subsection*{Acknowledgements} 
I would like to express my gratitude to my supervisor, Hisashi Kasuya, for his guidance and illuminating suggestions, which culminated in the results presented in this paper. I am also grateful to Viviana del Barco for her initial guidance in the study of Einstein-Weyl geometry.

\section{Preliminaries}
 
\subsection{LCK Structures}

Let $(M^n, J, \omega)$ be a Hermitian manifold with complex structure $J$ and fundamental form $\omega$ with real dimension $n \geq 4$. For this work we consider all manifolds to be connected.
\begin{Def}
\mbox{}
    \begin{itemize}
        \item If there exists a closed 1-form $\theta$ such that $d\omega = \theta \wedge \omega$, then the triple $(J, \omega, \theta)$ is called a Locally Conformally K\"ahler (LCK) structure on $M$.

        \item If $\theta$ is exact, then the Hermitian metric $g$ is conformal to a K\"ahler one. In this case we call the structure Globally Conformally K\"ahler (GCK).

        \item We call this structure Vaisman if $\theta$ is parallel with respect to the Levi-Civita connection, $\nabla^g \theta = 0$.
    \end{itemize}
\end{Def}
We implicitly assume that the LCK structures are not GCK. In this way we exclude the K\"ahler case, unless explicitly stated. This can be done because of the following fundamental fact.

\begin{Theo}[Vaisman \cite{Vaisman}]
    Let $(J, \omega, \theta)$ be an LCK structure on a compact manifold $M$. If the LCK structure is not GCK, then $(M, J)$ does not admit any K\"ahler metric.
\end{Theo}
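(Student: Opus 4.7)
The plan is to show that, under the hypotheses, the Lee form $\theta$ must be exact, thereby contradicting the non-GCK assumption. Suppose for contradiction that $(M,J)$ admits a K\"ahler metric $g_K$ while carrying a non-GCK LCK structure $(J,\omega,\theta)$, so $[\theta]\neq 0$ in $H^1_{dR}(M;\R)$, and let $m$ denote the complex dimension of $M$.

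First I would reduce to the case where $\theta$ is harmonic. By Hodge theory on the compact K\"ahler manifold $(M,J,g_K)$, one can write $\theta = \theta_h + df$ with $\theta_h$ harmonic, and $\theta_h\neq 0$ because $[\theta]\neq 0$. The conformally modified form $\omega' := e^{-f}\omega$ is still a positive $(1,1)$-form on $(M,J)$ and a short computation shows $d\omega' = \theta_h\wedge\omega'$. Replacing $\omega$ by $\omega'$, which does not alter the de Rham class of the Lee form, I may assume $\theta$ itself is harmonic. This gauge-fixing step is the first delicate point of the argument.

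Next I would exploit the type decomposition of harmonic $1$-forms on a compact K\"ahler manifold: the real harmonic form $\theta$ splits as $\theta = \alpha + \bar\alpha$ for a nonzero holomorphic $(1,0)$-form $\alpha$, which is therefore $d$-closed since holomorphic forms on compact K\"ahler manifolds are automatically $d$-closed. Using $d\omega=(\alpha+\bar\alpha)\wedge\omega$, $d\alpha=0$, and $\alpha\wedge\alpha=0$, a direct computation gives
\[
d\bigl(\alpha\wedge\omega^{m-1}\bigr)=-(m-1)\,\alpha\wedge\bar\alpha\wedge\omega^{m-1},
\]
and Stokes' theorem then forces $\int_M \alpha\wedge\bar\alpha\wedge\omega^{m-1}=0$.

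The contradiction comes from positivity: $i\,\alpha\wedge\bar\alpha$ is a semi-positive real $(1,1)$-form, vanishing only where $\alpha$ does, and $\omega^{m-1}$ is strongly positive, so the integrand $i\,\alpha\wedge\bar\alpha\wedge\omega^{m-1}$ is a pointwise non-negative multiple of the volume form. Hence the vanishing of the integral forces $\alpha\equiv 0$, contradicting $\theta_h\neq 0$. The step I expect to require the most care is identifying the correct $(2m-1)$-form $\alpha\wedge\omega^{m-1}$ to plug into Stokes after the conformal normalization; once this choice is made, the rest of the argument reduces to standard compact-K\"ahler facts (Hodge decomposition, closedness of holomorphic forms, and the positivity of wedge powers of Hermitian forms).
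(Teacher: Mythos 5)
The paper states this result as a citation to Vaisman and gives no proof, so there is nothing internal to compare against; your argument is correct and is essentially the classical one (conformally normalize so the Lee form is $g_K$-harmonic, split it as $\alpha+\bar\alpha$ with $\alpha$ holomorphic hence closed, integrate $d(\alpha\wedge\omega^{m-1})=-(m-1)\alpha\wedge\bar\alpha\wedge\omega^{m-1}$ over $M$, and conclude $\alpha\equiv 0$ by positivity). All the individual steps — the identity $d\omega'=\theta_h\wedge\omega'$ for $\omega'=e^{-f}\omega$, the Hodge-theoretic type decomposition of real harmonic $1$-forms on compact K\"ahler manifolds, and the strict positivity of $i\,\alpha\wedge\bar\alpha\wedge\omega^{m-1}$ off the zero set of $\alpha$ — check out, and the hypothesis $\dim_{\R}M\ge 4$ guarantees $m-1\ge 1$ so the Stokes argument is not vacuous.
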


Vaisman manifolds are closely related to Sasakian manifolds. 

\begin{Def}
    A Sasakian manifold is a Riemmanian manifold $(M, g)$ of odd dimension such that the Riemannian cone $(\R^{>0} \times M, d t^2+ t^2g)$ has a complex structure such that the metric is K\"ahler.
\end{Def}

In fact, Sasakian manifolds provide a source of examples of Vaisman manifolds by considering the $\Z$ action of a holomorphic homothety $h_\lambda (t, p) := ( \lambda t, p)$, for $\lambda > 0$, on $\R^{>0} \times M$. The quotient $\Z \backslash (\R^{>0} \times M)$ is naturally endowed with a Vaisman structure. 

By using the Structure Theorem in \cite{OV1, OVStructure}, Bazzoni, Marrero and Oprea obtained the following theorem which is key for our proof of Theorem \ref{MainTheorem}.

\begin{Theo}[\cite{Bazzoni2}]\label{Bazzoni}
     Let $M$ be a compact Vaisman manifold. Then, there exists a finite cover $p : S\times S^1 \to M$ , where $S$ is a compact Sasakian manifold, the deck group is isomorphic to $\Z_m$, for some $m > 0$, acts diagonally and by translations on the
     second factor. We have a diagram of fiber bundles
\[
\begin{tikzcd}
    S \arrow{r}{} \arrow{d}{} & S\times S^1 \arrow{r}{} \arrow{d}{p} & S^1 \arrow{d}{} \\
    S \arrow{r}{}  & M \arrow{r}{} & S^1  
\end{tikzcd}
\]
and M fibers over the circle $S^1/\Z_m$ with finite structure group $Z_m$.
\end{Theo}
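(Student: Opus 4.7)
The approach is to invoke the Ornea--Verbitsky Structure Theorem (the reference \cite{OV1, OVStructure} cited in the statement) to realize $M$ as a mapping torus of a Sasakian automorphism, and then to exploit the compactness of the Sasakian automorphism group to kill the monodromy after a finite cyclic cover.

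First, I would recall that on any compact Vaisman manifold the Lee vector field $\theta^\sharp$ is parallel and Killing, and its flow preserves the whole Vaisman structure. By the Structure Theorem, the closure of this flow yields a smooth submersion $M \to S^1$ whose fibers are all isomorphic to a fixed compact Sasakian manifold $S$. Pulling back to the universal cover of the base, this presents $M$ as a mapping torus $M \cong (S \times \R)/\Z$, where $1 \in \Z$ acts as $(p, t) \mapsto (\phi(p), t+1)$ for some Sasakian automorphism $\phi$ of $S$. The problem reduces to showing that a positive power of $\phi$ is isotopic, through Sasakian automorphisms, to the identity.

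Second, I would study the closed subgroup $T := \overline{\langle \phi \rangle}$ inside $\Aut(S)$. Since every Sasakian automorphism is in particular an isometry of the underlying Riemannian metric, $\Aut(S) \subseteq \Iso(S)$ sits inside a compact Lie group and is therefore compact itself. Hence $T$ is a compact abelian Lie group and splits as $T \cong T^k \times F$ with $F$ a finite abelian group. Setting $m := |F|$, the element $\phi^m$ lies in the identity component $T^0$, so $\phi^m = \exp(X)$ for some Killing vector field $X$ preserving the Sasakian structure; denote its flow by $\psi_s$. Because $T$ is abelian, $\phi$ commutes with every $\psi_s$.

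Third, I would pass to the $m$-fold cover $N := (S \times \R)/(m\Z) \to M$ corresponding to the unique index-$m$ subgroup of $\pi_1(S^1) = \Z$ and trivialize it via the diffeomorphism $\Phi(p,t) := (\psi_{-t/m}(p), t)$, which conjugates the $m\Z$-action $(p,t) \mapsto (\phi^m(p), t+m) = (\psi_1(p), t+m)$ into the pure translation $(p,t) \mapsto (p, t+m)$. This identifies $N$ with $S \times (\R/m\Z) = S \times S^1$. The deck group $\Z_m = \Z/m\Z$ of $N \to M$ is generated by the image of $1 \in \Z$; in the new coordinates, using that $\phi$ commutes with $\psi_s$, this generator acts as $(p, [t]) \mapsto (\psi_{1/m} \phi(p), [t] + 1/m)$, which is diagonal and acts on the second factor by translation, as required.

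The main obstacle will be the careful bookkeeping for the conjugation by $\Phi$ and the verification that $\phi$ commutes with $\psi_s$; this last point rests on identifying $T$ as an abelian compact Lie group, which in turn requires both the compactness of $\Iso(S)$ and the standard structure theorem describing a closed abelian subgroup of a compact Lie group as a product of a torus and a finite group. A subtler check is that the vector field $X$ preserves the \emph{Sasakian} (not merely Riemannian) structure, so that the diffeomorphism $\Phi$ really descends to an isomorphism of Vaisman manifolds on the covers; this follows because $X$ lies in the Lie algebra of $\Aut(S) \cap \Iso(S)$.
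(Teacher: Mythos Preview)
The paper does not supply a proof of this theorem at all: it is quoted verbatim as a result of Bazzoni, Marrero and Oprea \cite{Bazzoni2}, building on the Ornea--Verbitsky Structure Theorem \cite{OV1, OVStructure}, and is used as a black box in the proof of Theorem~\ref{MainTheorem}. So there is nothing in the present paper to compare your argument against.

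That said, your outline is exactly the standard argument behind the cited result and is essentially correct. Realizing $M$ as a mapping torus of a Sasakian automorphism $\phi$, passing to the closure $T=\overline{\langle\phi\rangle}$ inside the compact group $\Aut(S)$, writing $T\cong T^k\times F$ with $m=|F|$ so that $\phi^m$ lies in the identity component, and then untwisting the $m$-fold cover by the one-parameter subgroup through $\phi^m$ is precisely how the finite cyclic cover $S\times S^1\to M$ is produced. One minor slip: in your conjugation computation the generator of the deck group comes out as $(q,[t])\mapsto(\phi\,\psi_{-1/m}(q),[t]+1)$ rather than $(\psi_{1/m}\phi(q),[t]+1/m)$; the sign and the normalization of the circle coordinate are off, but since $\phi$ and $\psi_s$ commute and the statement only requires a diagonal action with translation on the $S^1$ factor, this does not affect the conclusion. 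Your remark that $X$ must preserve the Sasakian structure (not merely the metric) is the only genuinely delicate point, and you have identified the right reason for it.
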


\subsection{Einstein-Weyl LCK metrics}

For a given metric $g$ on a smooth manifold $M$ one can consider the conformal class $c$ defined by $g$, i.e., 
$c = \{e^{2f} g \ |\ f\in \mathcal{C}^{\infty}(M)\}$. We call the pair $(M, c)$ a conformal manifold. For a pair $(g, \theta)$ of a metric $g$ and a 1-form $\theta$, we can define an affine connection on $M$ by
\begin{equation*}
    \nabla_X Y = \nabla^g_X Y + \theta(X)Y + \theta(Y)X - g(X, Y)\theta^{\#_g}.
\end{equation*}
This affine connection satisfies $\nabla g = -2 \theta \otimes g$. One can show that for a conformal transformation $h := e^{2f} g$, by defining $\theta^h := \theta - 2df$, we get that $(h, \theta^h)$ defines the same $\nabla$.
We call $\nabla$ a Weyl structure on $M$ and $\theta^g$ the Lee form associated with $g$. We can define its curvature and Ricci curvature by
\begin{align*}
    R^\nabla(X, Y)Z &:= \nabla_{[X,Y]}Z - [\nabla_X, \nabla_Y]Z\\
    \Ric^{\nabla}(X,Y) &:= \Tr(Z \mapsto R^{\nabla}(X, Z)Y).
\end{align*}
By a choice of $g$ in $c$ we can define $\Scal_g^\nabla$ by the $g$-trace of the Ricci curvature $\Ric^\nabla$.

\begin{Def}
    Let $(M, c)$ be a conformal manifold with a Weyl structure $\nabla$, and denote $n := \dim M$. We say that $\nabla$ is Einstein-Weyl if
    $$\Ric^{\nabla} = \frac{1}{n} \Scal_g^{\nabla} g - \frac{1}{2}(n-2)d\theta,$$
    for one (hence for all) $g \in c$.
\end{Def}

An LCK structure $(\omega, \theta)$ on a complex manifold $(M, J)$ defines naturally a Weyl structure by considering the pair $(g, -2 \theta)$. If the $\nabla$ induced by the LCK structure in this way is Einstein-Weyl, we say that $M$ is an Einstein-Weyl LCK manifold. 

\begin{Theo}[\cite{Gauduchon1}]
    Let $(M, c)$ be a compact oriented conformal manifold of dimension $n \geq 3$ and let $\nabla$ be a Weyl structure on $(M, c)$. Then, there exists a metric $g \in c$, unique up to multiplication by a real number, such that the Lee form $\theta$ associated to $g$ in the Weyl structure satisfy $\delta^g \theta = 0$. 
\end{Theo}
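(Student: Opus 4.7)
The approach is to recast the existence of such a metric as a linear elliptic PDE and solve it via Fredholm theory together with the strong maximum principle and the Krein--Rutman theorem. Fix any reference metric $g\in c$ with associated Lee form $\theta$, and write an arbitrary representative as $g' = e^{2f}g$; the Lee form of the same Weyl structure with respect to $g'$ is $\theta^{g'} = \theta - 2df$. The conformal transformation rule $\delta^{e^{2f}g}\alpha = e^{-2f}(\delta^g\alpha - (n-2)g(df,\alpha))$ for a $1$-form $\alpha$ turns the desired identity $\delta^{g'}\theta^{g'} = 0$ into the nonlinear scalar equation
\begin{equation*}
    \delta^g\theta + 2\Delta_g f - (n-2)\, g(\theta,df) + 2(n-2)\,|df|_g^2 = 0.
\end{equation*}
The quadratic term is eliminated by the substitution $u = e^{(n-2)f}$, which is available precisely because $n\geq 3$; a direct calculation collapses the equation into
\begin{equation*}
    Lu := 2\Delta_g u - (n-2)\,g(\theta,du) + (n-2)(\delta^g\theta)\, u = 0.
\end{equation*}
Producing $g'$ thus reduces to finding a strictly positive $u\in C^\infty(M)$ with $Lu = 0$.

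A short integration by parts identifies the formal $L^2$-adjoint as $L^* v = 2\Delta_g v + (n-2)\, g(\theta, dv)$, which carries no zeroth-order term. Consequently the strong maximum principle on the connected compact manifold $M$ forces $\ker L^*$ to consist exactly of the constants, and the Fredholm alternative for elliptic operators yields $\dim\ker L = \dim\ker L^* = 1$.

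It remains to exhibit a positive generator of $\ker L$. For $C > 0$ large enough the zeroth-order coefficient of $L + C\,\Id$ is positive, so the weak maximum principle makes the compact resolvent $(L + C\,\Id)^{-1}$ a positive operator; the Krein--Rutman theorem then provides a principal eigenvalue of $L$ that is real, simple, admits a positive eigenfunction, and is uniquely characterized by this positivity (and analogously for $L^*$). Since $L^* 1 = 0$ exhibits $0$ as an eigenvalue of $L^*$ with positive eigenfunction, the principal eigenvalue of $L^*$, and hence of $L$, must be $0$. Therefore $\ker L = \R\cdot u_0$ for some $u_0 > 0$; setting $g' = u_0^{2/(n-2)} g$ produces the desired metric, and uniqueness up to a positive scalar follows from $\dim\ker L = 1$. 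The main obstacle lies precisely in this last step: since $L$ carries a zeroth-order term of indefinite sign, positivity of the kernel generator cannot be read off directly from a maximum principle applied to $L$, and must be routed through the Krein--Rutman machinery together with the identity $L^* 1 = 0$.
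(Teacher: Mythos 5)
The paper offers no proof of this statement at all --- it is quoted verbatim from Gauduchon's work and used as a black box --- so there is nothing internal to compare against; your argument is, in substance, Gauduchon's own: reduce to the linear elliptic equation $Lu=0$, observe that $L^*$ has no zeroth-order term so its kernel is the constants, get $\dim\ker L=1$ from index zero, and extract a positive generator via the positivity of the resolvent and Krein--Rutman. The reduction is computed correctly (the substitution $u=e^{(n-2)f}$ does kill the quadratic term, and the integration by parts giving $L^*v=2\Delta_g v+(n-2)g(\theta,dv)$ is right), and the endgame via $L^*1=0$ identifying $0$ as the principal eigenvalue is the standard and correct way to obtain positivity of the kernel generator, which --- as you rightly stress --- cannot be read off from a maximum principle for $L$ itself. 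One concrete slip: with your sign convention $\Delta_g=-\delta^g d$ (forced by the $+2\Delta_g f$ term in your first displayed equation), the maximum-principle-friendly, positivity-preserving resolvent is $(C\,\Id-L)^{-1}$, not $(L+C\,\Id)^{-1}$; indeed $L+C\,\Id$ has the form $2\Delta_g+\cdots+(\text{positive zeroth-order term})$ and need not even be invertible for large $C$ (take $\theta=0$, so $L=2\Delta_g$, and $C$ twice an eigenvalue of $-\Delta_g$). Replacing the shift by $(C\,\Id-L)^{-1}$, whose positivity does follow from the weak and strong maximum principles since $C-L$ has strictly positive zeroth-order coefficient for large $C$, repairs the step without changing anything else. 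The factor of $2$ in $\theta^{g'}=\theta-2df$ follows the paper's stated convention, so no complaint there, and the final uniqueness claim correctly uses that the positive elements of a one-dimensional kernel spanned by a positive function form a single ray.
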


The metric of the theorem above is called the Gauduchon metric.

\begin{Theo}[\cite{Gauduchon2}]
    Let $(M, c)$ and $\nabla$ be as in the previous theorem. Consider the Gauduchon metric $g$ and the associated Lee form $\theta$. If $\nabla$ is Einstein-Weyl and $\theta \neq 0$, then the Lee field $X = \theta^{\#_g}$, the $g$-dual of $\theta$, is a Killing vector field.
\end{Theo}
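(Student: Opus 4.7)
The plan is to compare the Ricci tensor of the Weyl connection $\nabla$ with that of the Levi--Civita connection $\nabla^g$ of the Gauduchon metric, extract the symmetric part of the Einstein--Weyl equation, and combine the Gauduchon gauge $\delta^g\theta = 0$ with the contracted second Bianchi identity to produce an integrated identity forcing the symmetric part of $\nabla^g\theta$ to vanish, i.e., $X = \theta^{\#_g}$ to be Killing.

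First, using the defining formula $\nabla_X Y = \nabla^g_X Y + \theta(X)Y + \theta(Y)X - g(X,Y)\theta^{\#_g}$, a direct computation expresses $\Ric^\nabla$ (up to the sign convention for $R^\nabla$ adopted in the preliminaries) as
\[
\Ric^\nabla(Y,Z) = \Ric^g(Y,Z) - (n-2)(\nabla^g_Y\theta)(Z) - (\delta^g\theta)\,g(Y,Z) + (n-2)\bigl(|\theta|^2 g(Y,Z) - \theta(Y)\theta(Z)\bigr).
\]
The antisymmetric part equals $-\tfrac{1}{2}(n-2)d\theta$ identically, matching the corresponding term on the right-hand side of the Einstein--Weyl equation and contributing no new information. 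All content therefore lives in the symmetric part, which after imposing $\delta^g\theta = 0$ becomes
\[
\Ric^g - (n-2)\,\mathrm{Sym}(\nabla^g\theta) + (n-2)\bigl(|\theta|^2 g - \theta\otimes\theta\bigr) = \tfrac{1}{n}\Scal_g^\nabla\, g.
\]

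Next, I would take the $g$-divergence of this identity, pair the resulting $1$-form with $\theta$, and integrate over $M$. The divergence of $\Ric^g$ is controlled by the contracted Bianchi identity $\delta^g\Ric^g = -\tfrac{1}{2}d\Scal^g$, the divergence of $\mathrm{Sym}(\nabla^g\theta)$ produces $\Delta_g\theta$ together with a Ricci correction from commuting covariant derivatives, and the quadratic terms in $\theta$ contribute a multiple of $d|\theta|^2$. Crucially, every pure gradient contribution (including $d\Scal^g$ and $d\Scal_g^\nabla$) vanishes after integration by parts against $\theta$, precisely because the Gauduchon gauge gives $\delta^g\theta = 0$. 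What remains is, by design, an identity of the schematic form
\[
\int_M \bigl|(\nabla^g\theta)_{\mathrm{sym}}\bigr|_g^2 \, \mathrm{vol}_g = 0,
\]
which can equivalently be read off from the Weitzenb\"ock identity $\Delta_g\theta = \nabla^{g*}\nabla^g\theta + \Ric^g(\theta^{\#_g},\cdot)$ specialized to a coclosed $1$-form satisfying the Einstein--Weyl constraint. This forces $(\nabla^g\theta)_{\mathrm{sym}} = 0$, that is, $\mathcal{L}_X g = 0$.

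The main obstacle is coefficient and sign bookkeeping. The Ricci tensor of a Weyl connection is genuinely non-symmetric, the excerpt adopts the sign convention $R^\nabla(X,Y) = \nabla_{[X,Y]} - [\nabla_X,\nabla_Y]$ (opposite to the most common one), and a slip in any of the $(n-2)$ factors or in the overall sign flips the resulting quadratic form from positive to negative and destroys the argument. The cleanest sanity check is to cross-reference the computation against the standard Weitzenb\"ock identity for coclosed $1$-forms; once that calibration is in place, the integration-by-parts step closes automatically, and the hypothesis $\theta\neq 0$ only serves to ensure that the resulting Killing field $X$ is not vacuously trivial.
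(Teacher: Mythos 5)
This statement is quoted from Gauduchon's work (\cite{Gauduchon2}) and the paper supplies no proof of its own, so there is nothing internal to compare against; I can only assess your outline on its merits. What you sketch is, in substance, the known argument: split $\Ric^\nabla$ into symmetric and antisymmetric parts, observe that the antisymmetric part is matched identically by the $d\theta$ term in the Einstein--Weyl equation (that coefficient is chosen precisely so that the condition only constrains the symmetric trace-free part), take the divergence of the symmetric part, pair with $\theta$, and integrate. Your strategy is correct, and one remark makes it more robust than you fear: you do not need the Weitzenb\"ock identity or an explicit formula for $\delta^g\bigl(\mathrm{Sym}(\nabla^g\theta)\bigr)$ at all. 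After pairing the divergence of the symmetric equation with $\theta$ and integrating, \emph{every} term other than the $\mathrm{Sym}(\nabla^g\theta)$ term vanishes individually: $\int\langle\delta^g\Ric^g,\theta\rangle = \pm\tfrac12\int\langle d\Scal^g,\theta\rangle = 0$ and $\int\langle\delta^g(fg),\theta\rangle = \pm\int f\,\delta^g\theta = 0$ by the Gauduchon gauge, while $\int\langle\delta^g(\theta\otimes\theta),\theta\rangle$ and $\int\langle\delta^g(|\theta|^2 g),\theta\rangle$ reduce to $\int |\theta|^2\,\delta^g\theta = 0$. The surviving term is $\pm(n-2)\int\langle \mathrm{Sym}(\nabla^g\theta),\nabla^g\theta\rangle = \pm(n-2)\int|\mathrm{Sym}(\nabla^g\theta)|^2$, and since either sign forces the integrand to vanish, the coefficient and sign bookkeeping you flag as the main obstacle cannot actually derail the conclusion $\mathcal{L}_X g = 0$. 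Your reading of the hypothesis $\theta\neq 0$ is also right: it only rules out the degenerate (closed Weyl structure) case.

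One caveat: as written, your displayed formula for $\Ric^\nabla$ has sign discrepancies in the $\delta^g\theta$ and quadratic terms relative to a direct computation from the paper's conventions, and the antisymmetric part of a Weyl--Ricci tensor is $-\tfrac{n}{2}d\theta$ rather than $-\tfrac{n-2}{2}d\theta$ in the most common normalization. None of this affects the argument for the reason above, but if you were to write the proof out in full you would need to fix one consistent convention and recompute the decomposition within it rather than importing the formula.
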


 If one considers that $\nabla^g \theta = \frac{1}{2}\mathcal{L}_{\theta^{\#_g}} g + \frac{1}{2} d \theta$ for any $1$-form $\theta$, then we obtain the following.

\begin{Cor}\label{Gauduchon}
    If $M$ is an Einstein-Weyl LCK manifold, then its Gauduchon metric $g$ is a Vaisman metric.
\end{Cor}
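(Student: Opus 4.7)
The statement is a direct consequence of the two Gauduchon theorems quoted just above it, together with the identity flagged in the sentence preceding the corollary, so the plan is essentially to assemble these ingredients.

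First I would fix the setup. By hypothesis $(M,J,\omega,\theta_{\text{LCK}})$ is LCK and the associated Weyl structure, built from the pair $(g,-2\theta_{\text{LCK}})$, is Einstein–Weyl. The first quoted theorem of Gauduchon gives a distinguished representative $g\in c$ (the Gauduchon metric), which induces a Lee form $\theta$ on $(M,c,\nabla)$ satisfying $\delta^g\theta=0$. By conformal invariance of the LCK condition, $g$ is itself an LCK metric with closed Lee form, so $d\theta=0$; it will be convenient to identify this $\theta$ with a scalar multiple of the LCK Lee form of $g$ so that the Vaisman condition $\nabla^g\theta=0$ is the goal.

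Next I would apply the second quoted theorem: provided $\theta\neq 0$, the Lee vector field $\theta^{\#_g}$ is Killing, i.e.\ $\mathcal{L}_{\theta^{\#_g}}g=0$. Plugging this together with $d\theta=0$ into the identity
\[
\nabla^g\theta \;=\; \tfrac{1}{2}\mathcal{L}_{\theta^{\#_g}}g+\tfrac{1}{2}d\theta
\]
stated just before the corollary, both terms on the right vanish, giving $\nabla^g\theta=0$. This is exactly the Vaisman condition, so $g$ is a Vaisman metric.

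The only real thing to verify is the standing hypothesis $\theta\neq 0$ in the second Gauduchon theorem. If instead $\theta=0$ at the Gauduchon representative, then $d\omega_g=\theta\wedge\omega_g=0$ so $(g,J)$ is K\"ahler; passing to any conformal representative produces an exact Lee form, so the original LCK structure would be GCK. This is excluded by the blanket convention stated after the LCK definition, so $\theta\neq 0$ holds and the argument above closes. I do not foresee a genuine obstacle: the Corollary is really a bookkeeping step that combines the two preceding Gauduchon theorems with the Kostant-type identity for $\nabla^g\theta$ on a Riemannian manifold.
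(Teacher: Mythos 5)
Your proposal is correct and follows exactly the route the paper intends: the paper gives no written proof beyond the sentence preceding the corollary, which signals precisely the combination of the two Gauduchon theorems with the identity $\nabla^g\theta = \tfrac{1}{2}\mathcal{L}_{\theta^{\#_g}}g + \tfrac{1}{2}d\theta$ that you carry out. Your explicit check that $\theta\neq 0$ (via the standing non-GCK convention) is a detail the paper leaves implicit, but it does not change the approach.
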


Denoting by $b_1(M)$ the first Betti number of $M$, we have the following result, which will be used to prove the final application of the main theorem of this work.  

\begin{Theo}[\cite{Verb}]\label{Verb}
    Let $M$ be a compact Einstein-Weyl LCK manifold. Assume $M$ is not K\"ahler. Then, $b_1(M) = 1$.
\end{Theo}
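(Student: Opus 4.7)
The plan is to reduce the statement to the topology of a Sasakian-Einstein manifold via the structure theorem of Bazzoni--Marrero--Oprea (Theorem \ref{Bazzoni}), then apply Myers' theorem, and finally compare first Betti numbers along a finite cover.

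First I would observe that, since $M$ is compact and assumed not K\"ahler, Corollary \ref{Gauduchon} promotes the Gauduchon representative $g$ in the conformal class to a Vaisman metric; in particular the Lee form $\theta$ is parallel and nonzero. Theorem \ref{Bazzoni} then furnishes a finite Galois cover
\[
    p : S \times S^{1} \longrightarrow M,
\]
where $S$ is a compact Sasakian manifold and the pulled-back Vaisman structure is the product of the Sasakian structure on $S$ with the flat structure on $S^{1}$; in particular the pulled-back Lee form is a constant multiple of $dt$.

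Next, I would transport the Einstein-Weyl condition to $S\times S^{1}$ and show that it forces $S$ to be Sasakian-Einstein. Since $\theta$ is parallel, $d\theta = 0$ and the Einstein-Weyl equation collapses to $\Ric^{\nabla} = \tfrac{1}{n}\Scal_{g}^{\nabla}\, g$. Plugging $\nabla^{g}\theta = 0$ into
\[
    \nabla_{X} Y = \nabla^{g}_{X} Y + \theta(X) Y + \theta(Y) X - g(X, Y)\theta^{\#_{g}}
\]
should allow one to express $\Ric^{\nabla}$ as $\Ric^{g}$ plus a scalar multiple of $g$ plus a scalar multiple of $\theta \otimes \theta$. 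On the product $S\times S^{1}$ one has $\Ric^{g} = \Ric^{g_{S}}$ on $TS$ and $\Ric^{g}(\partial_{t}, \cdot) = 0$, while $\theta \otimes \theta$ contributes only in the $dt$-direction. Splitting the Einstein-Weyl equation into its $TS$ and $\partial_{t}$ components should then yield $\Ric^{g_{S}} = \lambda\, g_{S}$ with $\lambda > 0$, so $(S, g_{S})$ is Sasakian-Einstein. This translation step is the main obstacle, since it is the only place where the Einstein-Weyl hypothesis is genuinely used, though it is a standard Weyl-geometric computation.

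Finally, with $S$ compact and of positive Ricci curvature, Myers' theorem yields $|\pi_{1}(S)| < \infty$ and hence $b_{1}(S) = 0$, so $b_{1}(S \times S^{1}) = 1$. Because $p$ is a finite cover, the transfer map makes $p^{*} : H^{1}(M;\R) \to H^{1}(S\times S^{1};\R)$ injective, giving $b_{1}(M) \leq 1$. Conversely, the parallel nonzero $\theta$ defines a nontrivial class in $H^{1}(M;\R)$: it is closed, and if it were exact, $\theta = df$, then $\nabla^{g} df = 0$ would force $f$ to be constant on the compact $M$, contradicting $\theta \neq 0$. Thus $b_{1}(M) \geq 1$, and combining the two bounds gives $b_{1}(M) = 1$.
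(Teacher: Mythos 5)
The paper does not prove this statement; it is quoted verbatim from \cite{Verb}, so there is no internal proof to compare against. Judged on its own merits, your proposal reconstructs what is essentially the standard argument from the literature and its skeleton is sound, but the two places you hedge are exactly the places that need to be nailed down. First, Theorem \ref{Bazzoni} as stated is a smooth/topological statement (a diagram of fiber bundles with deck group $\Z_m$); to run your computation you need the metric refinement from the Ornea--Verbitsky structure theory, namely that the pulled-back Vaisman metric on $S\times S^1$ is the Riemannian product $g_S + c^2\,dt^2$ with $g_S$ the Sasakian metric and the Lee form a constant multiple of $dt$. This is true but must be invoked explicitly, since nothing in the quoted statement gives it. Second, the ``standard Weyl-geometric computation'' does close: for a Weyl connection with $\nabla g = -2\theta\otimes g$ and $\theta$ parallel, the symmetrized Ricci identity reduces to
\[
\Ric^{\nabla} = \Ric^{g} + (n-2)\,\theta\otimes\theta - (n-2)\,|\theta|_g^2\, g,
\]
so the Einstein--Weyl equation (with $d\theta=0$) becomes $\Ric^{g} + (n-2)\theta\otimes\theta = \mu\, g$; evaluating on the $\partial_t$-direction gives $\mu=(n-2)|\theta|_g^2>0$ (here $\theta\neq 0$ because $M$ is compact and not K\"ahler, hence not GCK), and restricting to $TS$ gives $\Ric^{g_S}=\mu\, g_S$ with $\mu$ a positive constant. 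From there Myers' theorem, the injectivity of $p^*$ on rational cohomology for a finite cover, and the non-exactness of the parallel Lee form give $b_1(M)=1$ exactly as you say. In short: correct strategy, with the Ricci identity and the metric-product statement being the content you still owe.
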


\subsection{Solvmanifolds}

\begin{Def}\label{HeinsbrgDef}
    We define the Heisenberg group $H(1,n)$ of dimension $2n+1$  to be the following Lie group
    $$H(1,n) := \left\{
     \begin{pmatrix}
 1&  x_1& x_2 & \cdots & x_n  & z \\
 &  1&  &  &  & y_1 \\
 &  &  \ddots&   &  & y_2 \\
 &  &  & \ddots &  & \vdots  \\
 &  &  &  & 1 & y_n \\
 &  &  &  &  & 1 \\
\end{pmatrix} \ \middle| \ x_i,y_i,z \in \R \right\}$$
It is a 2-step nilpotent Lie group admitting a lattice. It also has a natural left-invariant Sasakian structure. The group can also be described as $\R^{2n+1}$ with the group multiplication
$$(z, x_1, y_1, \dots, x_n, y_n) \cdot  (z', x_1', y_1', \dots, x_n', y_n') := (z + z' + \sum_{i = 1}^{n} x_i y_i'\, , x_1 + x_1', y_1+y_1', \dots, x_n + x_n', y_n+y_n').$$
\end{Def}

\begin{Def}
    Let $G$ be a solvable Lie group and $\g$ be its Lie algebra. Assume that $G$ is connected and simply-connected. 
    \begin{itemize}
        \item A manifold $M$ is called a solvmanifold if it is the quotient of $G$ by a lattice $\Gamma$, i.e., a discrete cocompact subgroup of $G$, acting on the left on $G$ by left multiplication. In this case we write $M = \Gamma \backslash G$. 

        \item  In the case where $G$ is nilpotent, $M$ is called a nilmanifold. 

        \item If $G = \R \times H(1,n)$, we call $M$ a Kodaira-Thurston manifold.

        \item We say that $G$ is completely solvable if $ad_X$ only has real eigeinvalues for all $X \in \g$. In this case, $M$ is called a completely solvable solvmanifold.
    \end{itemize}
\end{Def}

\begin{Def}
    We call a path-connected topological space $X$ aspherical if all its homotopy groups $\pi_p(X)$ of order $p \geq 2$ are trivial.
\end{Def}

Every solvmanifold is aspherical, since, by our definition, they are given by a smooth covering where the universal covering is homeomorphic to $\R^n$ (\cite[Preliminaries]{Raghunathan}).

\begin{Theo}[Rigidity Theorem]
\mbox{}
    \begin{itemize}
        \item \textup{\cite[Theorem 2.11]{Raghunathan}} Two nilmanifolds with isomorphic fundamental groups have isomorphic associated Lie groups.

        \item \textup{\cite{Mostow}} Two solvmanifolds with isomorphic fundamental group are diffeomorphic. 
    \end{itemize}
\end{Theo}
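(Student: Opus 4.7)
The statement has two distinct parts that I would handle with rather different tools, since the nilpotent conclusion is much stronger (Lie group isomorphism) than the general solvable one (diffeomorphism only). Both rely crucially on recovering algebraic data about the ambient Lie group from the lattice. My general strategy is: in each case exhibit a natural construction that produces the Lie group (or at least the solvmanifold) from the abstract group $\Gamma$, and then argue that the construction is functorial enough that an isomorphism of lattices induces the required equivalence.

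For the first part (Malcev/Raghunathan), the plan is to use the Malcev completion. If $\Gamma$ is a lattice in a simply connected nilpotent Lie group $G$, then $\Gamma$ is finitely generated, torsion-free, and nilpotent; conversely every such group arises this way. The key step is to construct, purely from $\Gamma$, a rational nilpotent Lie algebra $\mathfrak{g}_{\mathbb{Q}}$ whose real form is the Lie algebra of $G$. Concretely, one rationalizes $\Gamma$ via a Malcev basis $\{x_1,\ldots,x_n\}$ adapted to the lower central series: for $\gamma \in \Gamma$ the exponents $a_i$ in $\gamma = x_1^{a_1}\cdots x_n^{a_n}$ extend to rational powers, giving $\Gamma \otimes \mathbb{Q}$, and the Baker--Campbell--Hausdorff formula (which terminates) puts a Lie algebra structure on the $\log$. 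Since this construction is canonical, an abstract isomorphism $\Gamma_1 \cong \Gamma_2$ extends uniquely to an isomorphism $\mathfrak{g}_1 \otimes \mathbb{R} \cong \mathfrak{g}_2 \otimes \mathbb{R}$, hence by $\exp$ (a global diffeomorphism in the nilpotent case) to an isomorphism $G_1 \cong G_2$.

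For the second part (Mostow), the exponential map is no longer a diffeomorphism and in fact two non-isomorphic simply connected solvable Lie groups can admit lattices with isomorphic fundamental groups, so one can only hope for a diffeomorphism of the quotients. The plan is first to use that $\Gamma\backslash G$ is aspherical, so isomorphic $\pi_1$ already implies a homotopy equivalence between $\Gamma_1\backslash G_1$ and $\Gamma_2\backslash G_2$. To upgrade this to a diffeomorphism I would invoke the Mostow bundle structure: every solvmanifold fibers smoothly over a torus with nilmanifold fiber, the fiber being $(\Gamma \cap N)\backslash N$ where $N$ is the nilradical of $G$. The nilpotent rigidity of the first part then classifies the fiber up to isomorphism from the group-theoretic data $\Gamma \cap N$ (which is characteristic in $\Gamma$), and one constructs a diffeomorphism compatible with the bundle structure and inducing the prescribed isomorphism on $\pi_1$.

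The hardest part is undoubtedly the Mostow step. Matching the Mostow fibrations on the two sides requires controlling how the polycyclic extension twists the nilpotent fiber; the action of $\Gamma/(\Gamma\cap N)$ by conjugation on $\Gamma\cap N$ encodes the monodromy of the bundle, and one must show the isomorphism of lattices is compatible with this monodromy after possibly precomposing with an inner automorphism. Once this is done the two bundles become smoothly isomorphic, using along the way that homotopy equivalences of nilmanifolds with compatible base data can be straightened to diffeomorphisms. For the purposes of the present paper I would in practice simply cite \cite{Raghunathan} and \cite{Mostow}, but the above is the conceptual path I would follow if asked for a self-contained argument.
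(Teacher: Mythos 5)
The paper does not prove this statement at all: it is quoted verbatim as a classical result with citations to Raghunathan and Mostow, so there is no internal proof to compare against, and your closing remark that in practice one would simply cite those sources is exactly what the author does. Your sketch of the nilpotent half is the standard and correct argument: a lattice in a simply connected nilpotent Lie group is finitely generated, torsion-free and nilpotent, the Malcev completion is functorial in $\Gamma$, and $\exp$ being a global diffeomorphism upgrades the induced Lie algebra isomorphism to a Lie group isomorphism. That is essentially Raghunathan's Theorem 2.11.

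The Mostow half of your sketch, however, contains a genuine gap. You propose to match the Mostow fibrations of the two solvmanifolds using the subgroup $\Gamma \cap N$, asserting that it is characteristic in $\Gamma$. It is not even well defined from the abstract group: the same polycyclic group can sit as a lattice in non-isomorphic simply connected solvable groups with genuinely different nilradical traces. For instance, $\Gamma = \Z^3$ is a lattice in $G_1 = \R^3$ (where $\Gamma \cap N_1 = \Z^3$) and also in $G_2 = \R \ltimes_{\varphi} \R^2$ with $\varphi(t)$ the rotation by angle $2\pi t$ (where $N_2 = \R^2$ and $\Gamma \cap N_2 = \Z^2$); both quotients are diffeomorphic to $T^3$, but the Mostow fibrations have different fibers, so no isomorphism of lattices can be ``compatible with the bundle structure'' in the sense you describe. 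The characteristic subgroup one can extract from $\Gamma$ is its Fitting subgroup, which in general strictly contains $\Gamma \cap N$, and reconciling these two is precisely the delicate point in Mostow's argument (his original proof had a gap here that required later repair). So while your overall plan captures the right flavor — asphericity gives a homotopy equivalence, and one must straighten it to a diffeomorphism — the concrete mechanism you propose for the straightening would fail at its first step, and the honest course is the one both you and the paper take: cite Mostow.
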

Recall that the nilradical $N$ of a solvable Lie group $G$ is the maximal normal connected nilpotent subgroup of $G$. 
\begin{Theo}[Mostow Structure Theorem \cite{Mostow}]
    Let $M = \Gamma \backslash G$ be a compact solvmanifold. Consider the nilradical $N$ of $G$. Then,
    \begin{enumerate}
        \item $N\Gamma$ is a closed subgroup of $G$;
        \item $N \cap \Gamma$ is a lattice in $N$;
        \item $N\Gamma \backslash G$ is a torus.
    \end{enumerate}
    Consequently, $M$ is fibered over a torus, with a nilmanifold as a fiber,
    \[
    (N \cap \Gamma)\backslash N \xrightarrow{} M \xrightarrow{} N\Gamma \backslash G
    \]
\end{Theo}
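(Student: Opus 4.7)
The plan is to follow Mostow's classical strategy by reducing all three claims to the behavior of the projection $\pi : G \to G/N$, exploiting that $G/N$ is simply connected and abelian.

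Since $\g$ is solvable, $[\g,\g]$ is nilpotent and therefore contained in the nilradical $\mathfrak{n}$; so $\g/\mathfrak{n}$ is abelian and the simply connected group $G/N$ is isomorphic to $\R^k$ for some $k$. Hence (3) reduces to showing that $\pi(\Gamma)$ is a lattice in $\R^k$, and (1) reduces to showing that $\pi(\Gamma)$ is discrete in $\R^k$, since then $N\Gamma = \pi^{-1}(\pi(\Gamma))$ is the preimage of a closed set and is automatically closed.

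The technical heart is (2). Since $\Gamma$ is a lattice in a solvable Lie group, it is polycyclic, and $\Lambda := N \cap \Gamma$ is a discrete subgroup of $N$; only cocompactness is non-trivial. I would argue via Mal'cev's rationality criterion: after embedding $N$ as the unipotent radical of an algebraic hull of $G$, it suffices to show that the Zariski closure of $\Lambda$ in $N$ equals $N$. This is forced by finite invariant volume of $\Gamma\backslash G$: if $\Lambda$ were contained in a proper closed connected subgroup $H \subsetneq N$, then the orbits of $\Gamma$ on $G/H$ would carry infinite measure, contradicting compactness of $M$. With (2) in place, $\Gamma/\Lambda$ injects into $G/N \cong \R^k$ with image $\pi(\Gamma)$, and a fundamental-domain argument combining cocompactness of $\Lambda$ in $N$ with compactness of $M$ shows that $\pi(\Gamma)$ is both discrete and cocompact in $\R^k$, hence a lattice of full rank. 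This yields (1), and the identification $N\Gamma\backslash G \cong \pi(\Gamma)\backslash\R^k$ then gives the torus in (3).

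The main obstacle is the cocompactness step inside (2). Discreteness of $\Lambda$ in $N$ is immediate from the discreteness of $\Gamma$ in $G$, but cocompactness requires transferring the global topological information (compactness of $\Gamma\backslash G$) into an internal structural statement about the nilpotent group $N$. Mal'cev's theory of rational structures and algebraic hulls provides the cleanest route; alternatively, one could proceed by induction on the derived length of $\g$, analyzing at each stage the solvable fibration provided by the derived series and invoking rigidity of lattices in each successive quotient. Once the cocompactness of $\Lambda$ is secured, the remainder of the argument is essentially a routine bookkeeping in the exact sequence $1 \to N \to G \to \R^k \to 1$.
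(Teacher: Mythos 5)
This statement is quoted in the paper as a classical result with a citation to Mostow (and it is Theorem 3.3 of Raghunathan's book); the paper supplies no proof of its own, so there is nothing to compare against except the classical argument itself. Measured against that, your outline gets the architecture right: part (2) is indeed the technical heart, and once $N\cap\Gamma$ is known to be a lattice in $N$, parts (1) and (3) do follow by the standard mechanism you describe (closedness of $H\Gamma$ and discreteness of the projected lattice when $\Gamma\cap H$ is cocompact in the closed normal subgroup $H$, plus the observation that $G/N$ is a simply connected abelian group because $[\g,\g]\subseteq\mathfrak{n}$).

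The gap is that your argument for (2) is not actually a proof. The claim that if $\Lambda=N\cap\Gamma$ were contained in a proper closed connected subgroup $H\subsetneq N$ then ``the orbits of $\Gamma$ on $G/H$ would carry infinite measure'' is not meaningful as stated: $H$ need not be normal in $G$, so $G/H$ is not a group and $\Gamma$ does not act on it in any way that produces the contradiction you want; and even for normal $H$ the finite-volume contradiction requires an argument, not an assertion. Moreover, the reduction ``embed $N$ as the unipotent radical of an algebraic hull of $G$'' is incorrect: the unipotent radical $U$ of the algebraic hull satisfies $\dim U=\dim G$, so it strictly contains the nilradical $N$ whenever $G$ is not nilpotent; the correct relation is $N=G\cap U$, which itself needs proof. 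The actual content of Mostow's theorem is the statement that $\mathrm{Ad}(N)$ lies in the Zariski closure of $\mathrm{Ad}(\Gamma)$ (equivalently, that the lattice is ``large enough'' in the unipotent direction), and this is established by an induction on the derived series of $\g$ using the polycyclic structure of $\Gamma$ and rigidity of lattices in the abelian quotients --- the very machinery you defer to in your final paragraph without executing. As it stands, your proposal is a correct road map with the destination marked but the hardest leg of the journey left untraveled; for the purposes of this paper the honest course is to cite the theorem, as the author does.
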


\begin{Def}
    We say that a group $\Gamma$ is strongly polycyclic if it admits a a sequence \[
    \Gamma = \Gamma_0 \supset \Gamma_1\supset \cdots\supset\Gamma_k = \{e\}
    \]
    of subgroups such that $\Gamma_i$ is normal in $\Gamma_{i-1}$ and $\Gamma_{i-1}/\Gamma_i$ is infinite cyclic.
\end{Def}

\begin{Example}
    We describe a family of examples given in \cite{AndradaOriglia} of solvmanifolds with left-invariant Vaisman structure that are not diffeomorphic to a nilmanifold. Let $\h(1,n)$ be the $2n+1$ dimensional Heisenberg nilpotent Lie algebra. Consider the basis $\{Z, X_1, Y_1, \dots, X_n, Y_n\}$ satisfying $[X_i, Y_i] = Z$, with all other brackets trivial. Let $a_1, \dots, a_n \in \R$ and define
\[
D:=\begin{pmatrix}
 0 &  &  &  &  & \\
 &  0& -a_1 &  &  &  \\
 &  a_1&  0&   &  &  \\
 &  &  & \ddots &  &   \\
 &  &  &  & 0 & -a_n \\
 &  &  &  & a_n & 0 \\
\end{pmatrix}
\]

Let $\g = \g_{a_1, \dots, a_n} := \R A \ltimes_D \h(1,n)$, where $A$ is just a symbol representing a generator. These are called the oscillator Lie algebras. One can show that if $\exists c \in \R\setminus \{0\}$ such that $(a_1, \dots, a_n) = c (b_1, \dots, b_n)$, then $\g_{a_1, \dots, a_n}$ and $\g_{b_1, \dots, b_n}$ are isomorphic. All these Lie algebras admits a Vaisman structure in the following manner. Define a complex structure $J$ by the relations $JA := Z, JX_i := Y_i$, and consider a metric $g$ given by declaring $\{A, Z, X_1, Y_1, \dots, X_n, Y_n\}$ an orthonormal basis of $\g$. Then, $g$ is an Hermitian metric by construction. Consider $\theta := A^*$ the dual of $A$ and $\omega$ the fundamental form. By \cite[Theorem 3.10]{AndradaOriglia} $(J, \omega, \theta)$ defines a Vaisman structure on $\g$. 

We consider now the associated Lie group $G = G_{a_1, \dots, a_n} := \R \ltimes_{\varphi} H(1,n)$, where we are considering $H(1,n)$ being described as $\R^{2n+1}$ and $\varphi(t) := e^{tD}$. Thefore,
\[
\varphi(t) = \begin{pmatrix}
 1 &  &  &  &  & \\
 &  \cos ta_1 & -\sin ta_1 &  &  &  \\
 &  \sin ta_1& \cos ta_1&   &  &  \\
 &  &  & \ddots &  &   \\
 &  &  &  & \cos ta_n & -\sin ta_n \\
 &  &  &  & \sin ta_n & \cos ta_n \\
\end{pmatrix}.
\]

In \cite{AndradaOriglia}, the authors takes $a_i \in \Z$ for all $i$. In this way, they consider a fixed lattice $\Gamma_k := \frac{1}{2k} \Z \times \dots \times \Z$ for $H(1,n)$ and construct three families of lattices in $G$ by
\begin{equation*}
    \Lambda_{k, \frac{\pi}{2}} := \frac{\pi}{2}\Z \ltimes_{\varphi} \Gamma_k, \qquad  \Lambda_{k, \pi} := \pi\Z \ltimes_{\varphi} \Gamma_k, \qquad \Lambda_{k, 2\pi} := 2\pi\Z \ltimes_{\varphi} \Gamma_k.
\end{equation*}
They also show that the first Betti number of the resulting solvmanifolds are all odd. Therefore, the LCK structure in $G$ passes down to the quotient to a non-GCK one, due to Vaisman's Theorem. If $D = 0$, then one recovers $\R\times H(1,n)$ with its canonical Vaisman structure. For $n = 1$, $G$ only depends of a single parameter $a \in \Z$. For $a = 0$, any Kodaira-Thurston manifold of $ \R \times H(1,1)$, with the complex structure described above, is called a primary Kodaira surface. For $a \neq 0$, any quotient by a non-nilpotent lattice is called a secondary Kodaira surface. It is known that a secondary Kodaira surface is a finite quotient of a primary one \cite{Hasegawa1}. 
\end{Example}

\section{Main Theorem and Corollaries}

\MainTheo

\begin{proof}
    Since $M$ is a compact Vaisman manifold, by Theorem \ref{Bazzoni} we have the following following diagram of fiber bundles
\[
\begin{tikzcd}
    S \arrow{r}{} \arrow{d}{} & S^1 \times S \arrow{r}{} \arrow{d}{q} & S^1 \arrow{d}{} \\
    S \arrow{r}{}  & M \arrow{r}{} & S^1  
\end{tikzcd}
\]
In particular, we have a fibration of $M$ by $S$ over $S^1$. Since $M$ is aspherical, using the long exact sequence of homotopy groups of fibrations, we obtain that $S$ is also aspherical and satisfy the following exact sequence on the fundamental groups

\[
\begin{tikzcd}
    1 \arrow{r}{} & \fg(S) \arrow{r}{} &\fg(M) \arrow{r}{} &\Z \arrow{r}{} & 1.
\end{tikzcd}
\]

Since $M$ is a solvmanifold we have that $\fg(M) = \Gamma$ and, by \cite[Theorem 4.28]{Raghunathan}, $\Gamma$ is a strongly polycyclic group. Since $\fg(S)$ is a subgroup of $\Gamma$, then it is also strongly polycyclic. Now, by \cite[Corollary 1.3.]{Kasuya1}, since $S$ is Sasakian, there is a nilpotent subgroup $\Lambda \leq \fg(S)$ of finite index. 

Let $u : \tilde{S} \to S$ be a covering of $S$ such that $\fg(\tilde{S}) = \Lambda$. Since there is a natural transitive action of $\fg(S)$ on the fibers of $u$, we know that the cardinality of any fiber $u^{-1}(\{x\})$ is given by the index of $\fg(\tilde{S})$ in $\fg(S)$ \cite[Theorem 10.9, Theorem 10.10]{Rotman}. Hence, $u$ is a finite covering, and we obtain that $\tilde{S}$ is compact. Endowing $\tilde{S}$ with the unique smooth structure such that $u$ is a smooth covering map, we obtain that $\tilde{S}$ is a compact aspherical Sasakian manifold with a nilpotent fundamental group. 

By \cite[Theorem 1.1]{Yudin2}, we obtain that $\tilde{S}$ is diffeomorphic to a Heisenberg nilmanifold $\Lambda \backslash H(1,n)$, where $\Lambda$ is being identified with a lattice in $H(1,n)$. Now, the composition $p : S^1 \times \tilde{S} \xrightarrow{\Id \times u} S^1 \times S \xrightarrow{q} M$ is also a finite smooth covering map of $M$. Since $( \Z \times \Lambda)\backslash (\R \times H(1,n))$ is diffeomorphic to $(\Z \backslash \R) \times (\Lambda \backslash H(1,n))  = S^1 \times \tilde{S}$ we conclude the theorem.
\end{proof}
\begin{Rem}
    In particular, the theorem agrees with the fact that secondary Kodaira surfaces are finite quotients of primary Kodaira surfaces.
\end{Rem}

Notice that in the proof the solvmanifold structure of $M$ was only used to obtain that $M$ is aspherical and $\pi_1(M)$ was strongly polycyclic. So more generally we have the following result.

\begin{Theo}\label{AsphericalVaisman}
    If $M$ is an aspherical compact Vaisman with strongly polycyclic fundamental group $\pi_1(M)$, then $M$ is a finite quotient of a Kodaira-Thurston manifold.
\end{Theo}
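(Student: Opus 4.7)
The plan is to recycle the proof of Theorem \ref{MainTheorem} verbatim, observing that every step used there went through only the two hypotheses now assumed, namely asphericity of $M$ and strong polycyclicity of $\pi_1(M)$. The solvmanifold structure entered the earlier argument solely to supply these two facts (via \cite[Preliminaries]{Raghunathan} for asphericity and \cite[Theorem 4.28]{Raghunathan} for strong polycyclicity), so nothing else needs to change.

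Concretely, I would first invoke Theorem \ref{Bazzoni} on the compact Vaisman manifold $M$ to obtain the finite covering $q : S^1 \times S \to M$ with $S$ compact Sasakian, and the induced fibration $S \to M \to S^1$. Applying the long exact sequence of homotopy groups to this fibration, asphericity of $M$ forces asphericity of $S$ and yields a short exact sequence
\[
1 \to \pi_1(S) \to \pi_1(M) \to \mathbb{Z} \to 1.
\]
Since $\pi_1(M)$ is strongly polycyclic by hypothesis, the subgroup $\pi_1(S)$ is strongly polycyclic as well (a subnormal series with infinite cyclic factors restricts to any subgroup). Then \cite[Corollary 1.3]{Kasuya1} produces a nilpotent finite-index subgroup $\Lambda \leq \pi_1(S)$.

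Next, I would take the covering $u : \tilde{S} \to S$ corresponding to $\Lambda$. Because $\Lambda$ has finite index, $u$ is a finite covering, so $\tilde{S}$ inherits a compact smooth structure and a Sasakian structure from $S$, remains aspherical, and has nilpotent fundamental group. The result of de Nicola–Yudin \cite[Theorem 1.1]{Yudin2} then identifies $\tilde{S}$ diffeomorphically with a Heisenberg nilmanifold $\Lambda \backslash H(1,n)$.

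Finally, the composition
\[
p : S^1 \times \tilde{S} \xrightarrow{\mathrm{Id} \times u} S^1 \times S \xrightarrow{q} M
\]
is a finite smooth covering, and $S^1 \times \tilde{S}$ is diffeomorphic to the Kodaira–Thurston manifold $(\mathbb{Z}\times\Lambda)\backslash(\mathbb{R}\times H(1,n))$, which gives the conclusion. There is no real obstacle here — the entire point of stating this as a separate theorem is to record that the proof of Theorem \ref{MainTheorem} is insensitive to the solvmanifold hypothesis beyond asphericity and strong polycyclicity of $\pi_1$, so the argument transplants without modification.
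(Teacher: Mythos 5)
Your proposal is correct and is exactly the paper's own argument: the paper proves Theorem \ref{AsphericalVaisman} precisely by remarking that the proof of Theorem \ref{MainTheorem} used the solvmanifold hypothesis only to secure asphericity and strong polycyclicity of $\pi_1(M)$, and your step-by-step verification that each ingredient (Theorem \ref{Bazzoni}, the homotopy exact sequence, heredity of strong polycyclicity to subgroups, Kasuya's corollary, and the de Nicola--Yudin theorem) needs nothing more is exactly the intended justification.
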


While it is clear that $ \Z \times \Lambda$ is isomorphic to a subgroup of the lattice $\Gamma$, in the proof of the theorem we only concluded that $N$ is given by the quotient of a nilpotent lattice in $\R\times H(1,n)$, however we can show that $\Z \times \Lambda$ is a \textbf{nilpotent lattice} of $G$, meaning $N$ is given by the quotient of the action by left multiplication of $\Z \times \Lambda$ in $G$. 

\begin{Lemma}
     Let $M = \Gamma \backslash G$ be a solvmanifold. Consider $q : N \to M$ a covering of $M$. Then, $\fg(N)$ is a discrete subgroup of $G$ acting by left translations on $G$. In particular, if $N$ is compact, then $\fg(N)$ is a sublattice of $\Gamma$.
\end{Lemma}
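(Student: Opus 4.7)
The plan is to use the fact that $G$ is connected and simply-connected (by our definition of solvmanifold) so that the quotient map $\pi : G \to \Gamma\backslash G = M$ is the universal covering. In particular, $\pi_1(M) \cong \Gamma$, acting on $G$ by left translations as the deck group of $\pi$.

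Given any covering $q : N \to M$, I would first invoke the lifting criterion of covering space theory: since $G$ is simply-connected, the map $\pi : G \to M$ factors uniquely (up to choice of basepoint) through $q$, producing a covering $\tilde{\pi} : G \to N$ with $q \circ \tilde{\pi} = \pi$. The deck group of $\tilde{\pi}$ is then canonically isomorphic to $\pi_1(N)$, and under the identification $\pi_1(M) = \Gamma$ the induced injection $q_* : \pi_1(N) \hookrightarrow \pi_1(M)$ realizes this deck group as a subgroup of $\Gamma$. Because every deck transformation of $\tilde{\pi}$ is in particular a deck transformation of $\pi$, the action of $\pi_1(N)$ on $G$ is by left translations.

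From here the conclusions are immediate. As a subgroup of the discrete group $\Gamma \leq G$, the image $q_*(\pi_1(N))$ is itself discrete in $G$, which gives the first claim. For the second, if $N$ is compact, then $N \cong \pi_1(N)\backslash G$ is a compact quotient of $G$ by a discrete subgroup acting freely by left translations, so $\pi_1(N)$ is cocompact in $G$ and hence a lattice; being contained in $\Gamma$, it is a sublattice.

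The argument is essentially a direct application of covering space theory, so there is no serious obstacle. The only point requiring care is the identification of $\pi_1(N)$ with its image in $G$: one must check that $q_*$ is injective (which holds because $q$ is a covering) and that the resulting subgroup of $\Gamma$ really is the deck group of $\tilde{\pi}$, so that its action on $G$ coincides with left translation by elements of $\Gamma$. Once this is verified, discreteness and the lattice property follow formally.
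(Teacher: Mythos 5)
Your argument is correct and follows essentially the same route as the paper: both produce the intermediate covering $G \to N$ sitting over $\pi$, identify its deck group with $\pi_1(N)$ via the simple connectivity of $G$, and observe that every such deck transformation is a deck transformation of $\pi$, hence left translation by an element of $\Gamma$. The only difference is cosmetic — you also spell out the cocompactness argument for the ``in particular'' clause, which the paper leaves implicit.
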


\begin{proof}

    Let $p : G \to N$ be the unique covering map such that the following diagram commutes
\[
\begin{tikzcd}
   G \arrow[rd]{}{p} \arrow[dd,swap]{}{\pi} &               \\
                         & N \arrow[ld]{}{q} \\
   M                    &   
\end{tikzcd}
\]
Let $\Aut(p)$ be the group of automorphisms of the covering $p$. Since $G$ is simply connected, we know that $\Aut(p)$ is isomorphic to $\fg(N)$. Now, for $\phi \in \Aut(p)$, we have $\pi \circ \phi = q \circ p\circ \phi = q \circ p = \pi$, thus $\Aut(p) \subset \Aut(\pi)$. Since $\Aut(\pi)$ is the set $\Gamma$ acting on the left on $G$, we obtain that $p$ is the quotient map of the left action of $\fg(N)$ on $G$. 
\end{proof}

\begin{Cor}
    Let $M = \Gamma \backslash G$ be a Vaisman solvmanifold. Then, $G$ admits a nilpotent sublattice.
\end{Cor}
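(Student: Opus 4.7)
The plan is to chain together the two results immediately preceding the corollary: the Main Theorem gives us a finite covering from a Kodaira-Thurston manifold, and the Lemma tells us that the fundamental group of any compact cover of $M$ sits inside $G$ as a sublattice of $\Gamma$. The desired nilpotent sublattice will then simply be the fundamental group of that Kodaira-Thurston cover.

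More concretely, first I would apply Theorem \ref{MainTheorem} to the Vaisman solvmanifold $M = \Gamma \backslash G$ to obtain a finite smooth covering $p : N \to M$, where $N = (\Z \times \Lambda)\backslash(\R \times H_{2n+1})$ is a Kodaira-Thurston manifold and $\Lambda$ is a lattice in $H_{2n+1}$. In particular $N$ is compact and $\pi_1(N) \cong \Z \times \Lambda$. Next I would invoke the preceding Lemma with this cover $p : N \to M$: since $N$ is compact, the conclusion is that $\pi_1(N)$ embeds as a sublattice of $\Gamma$ inside $G$, acting on $G$ by left translations.

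Finally, it remains to observe that $\Z \times \Lambda$ is nilpotent. This is immediate because $\Lambda \leq H_{2n+1}$ is a subgroup of a $2$-step nilpotent group and so is itself nilpotent, and a direct product of a nilpotent group with an abelian group is nilpotent. Hence $\Z \times \Lambda$ is a nilpotent sublattice of $G$, which proves the corollary. There is no real obstacle here; the content of the corollary is essentially just repackaging the Main Theorem together with the Lemma, with the only substantive check being that the explicit fundamental group of the Kodaira-Thurston cover produced in Theorem \ref{MainTheorem} is nilpotent, which is clear from its product form.
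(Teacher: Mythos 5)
Your proposal is correct and follows exactly the paper's own argument: apply Theorem \ref{MainTheorem} to obtain the Kodaira--Thurston cover $N$, use the preceding Lemma to realize $\pi_1(N) = \Z \times \Lambda$ as a sublattice of $\Gamma$ in $G$, and note that this group is nilpotent. No differences worth remarking on.
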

\begin{proof}
    Consider $N$ the Kodaira-Thurston manifold given by Theorem \ref{MainTheorem}. Then, by the above lemma we have that $\fg(N) = \Z\times \Lambda$ is a sublattice of $G$. Since $\fg(N)$ is nilpotent, the corollary follows.
\end{proof}

\begin{Rem}
    We can find a finite covering by a Kodaira-Thurston manifold explicitly in the family of the oscillator Lie groups $G = G_{a_1, \dots, a_n}$ considered in section 2. For the lattice $\Lambda_{k, 2\pi}$, we have that $\varphi(2\pi t) = \Id$ for any $t \in \Z$, therefore $\Lambda_{k, 2\pi} = 2\pi\Z \times \Gamma_k$, which is isomorphic to a lattice in $\R \times H(1,n)$. By the Rigidity Theorem, we obtain that $\Lambda_{k, 2\pi}\backslash G$ is a Kodaira-Thurston manifold. 
    For the lattice $\Lambda_{k, \pi}$, if all $a_i$ are even, then $\Lambda_{k, \pi}$ is essentially $\Lambda_{k, 2 \pi}$. If at least one $a_i$ is odd, one can consider $\Lambda_{k, 2\pi}$ as a subgroup of $\Lambda_{k, \pi}$, and realize that $\Lambda_{k, 2\pi} \backslash \Lambda_{k, \pi} = \Z_2$. Defining a natural action of $\Z_2$ on $\Lambda_{k, 2\pi}\backslash G$, by $1 \cdot [t, x] := [t + \pi, x]$, it can be seen that the quotient by this action is isomorphic to $\Lambda_{k,\pi}\backslash G$. For $\Lambda_{k, \frac{1}{2}\pi}$ the process is analogous.
\end{Rem}

\ComplSolv

\begin{proof}
    By Saito's Rigidity Theorem \cite{Saito}, if a completely solvable Lie group $G$ admits a nilpotent lattice, then $G$ is nilpotent. Since $\Z \times \Lambda$ is such a lattice for $G$, we obtain that $(\Z \times \Lambda)\backslash G$ is a nilmanifold diffeomorphic to $(\Z \times \Lambda) \backslash (\R \times H(1,n))$. By using the Rigidity Theorem for nilmanifolds, the result follows.
\end{proof}

For the next application in this section, we consider the Oeljeklaus-Toma (OT) manifolds, constructed originally in \cite{OT}, however we use their solvmanifold description as showed in \cite{KasuyaOT}. 

One has $G = \R^s \ltimes_\phi (\R^s \times \mathbb{C}^t)$, where 
\[
\phi (a_1, \dots , a_s) := \diag (e^{a_1}, \dots, e^{a_s}, \exp \sum_{j =1}^s {c_{1j} a_j}, \dots, \exp \sum_{j =1}^s {c_{tj} a_j})
\]
and $C = (c_{ij})$ is a matrix of complex entries satisfying some properties from the construction of such manifolds (see  \cite{KasuyaOT} or \cite{Kanda} for more details). From the original construction of Oeljeklaus and Toma, Kasuya showed that one can construct a lattice of the type $\Gamma = U \ltimes_\phi \mathcal{O}$, where $U \leq \R^s$ is a lattice of $\R^s$ and $\mathcal{O} \leq \R^s \times \mathbb{C}^t$ a lattice of $\R^s \times \mathbb{C}^t$. The quotient $M = \Gamma\backslash G$ is a solvmanifold which is not diffeomorphic to a nilmanifold called a OT manifold, and it carries a natural left-invariant non-Vaisman structure. In fact, Kasuya showed that, for $J$ left-invariant, $M$ cannot admit any Vaisman structure. We extend this result for any $J$ as follows.

\begin{Prop}\label{OT}
    Let $M$ be a OT manifold as above. Then, $M$ does not admit any Vaisman structure.
\end{Prop}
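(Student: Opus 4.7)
The plan is to invoke the Main Theorem together with its corollary asserting that every Vaisman solvmanifold's Lie group admits a nilpotent sublattice, and then derive a contradiction from the specific algebraic structure of $\Gamma = U \ltimes_\phi \mathcal{O}$. Assume for contradiction that $M$ admits a Vaisman structure. Then $G$ admits a nilpotent sublattice $\Lambda'$, which by the lemma preceding the corollary is contained in $\Gamma$; since both $\Lambda'$ and $\Gamma$ are lattices in $G$, $\Lambda'$ has finite index in $\Gamma$. The remainder of the argument will show that $\Gamma$ admits no such subgroup.

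Let $\pi : \Gamma \to U$ denote the natural projection, and set $A := \pi(\Lambda')$ and $B := \Lambda' \cap \mathcal{O}$. Since $\mathcal{O}$ is normal in $\Gamma$ and $\Lambda'$ has finite index, standard arguments yield that $A$ has finite index in $U \cong \Z^s$ and $B$ has finite index in $\mathcal{O} \cong \Z^{s+2t}$. The resulting short exact sequence $1 \to B \to \Lambda' \to A \to 1$ splits because $A$ is free abelian, realizing $\Lambda' \cong A \ltimes_\phi B$ with $A$ acting on $B$ via (the restriction of) $\phi$.

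The key algebraic input is that, for a semidirect product of finitely generated free abelian groups $A \ltimes_\psi B$, nilpotency forces $\psi(a)$ to be a unipotent $\R$-linear automorphism of $B \otimes \R$ for every $a \in A$; this is because iterated commutators $[a,[a,\ldots,[a,b]\ldots]]$ correspond under the semidirect product identification to $(\psi(a) - \Id)^k b$, which vanish for large $k$ only when $\psi(a)$ is unipotent. Applied to $\Lambda'$: since $B$ has finite index in $\mathcal{O}$, $B \otimes \R = \mathcal{O} \otimes \R = \R^s \oplus \mathbb{C}^t$, and from the definition of $\phi$ the eigenvalues of $\phi(a_1,\ldots,a_s)$ on the $\R^s$-factor are $e^{a_1},\ldots,e^{a_s}$. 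Unipotency forces $e^{a_i} = 1$ for all $i$, i.e.\ $(a_1,\ldots,a_s) = 0$. Hence $A = \{0\}$ and $\Lambda' \subseteq \mathcal{O}$.

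To conclude, compare Hirsch ranks: $\Gamma$ has Hirsch rank $2s + 2t$ while $\mathcal{O}$ has Hirsch rank $s + 2t$, and Hirsch rank is preserved under finite-index subgroups. Therefore $\Lambda' \subseteq \mathcal{O}$ of finite index in $\Gamma$ forces $s \leq 0$, contradicting $s \geq 1$ in the definition of OT manifolds. The main technical point to pin down is the nilpotency-to-unipotency implication used in the third paragraph; this is a classical fact about abelian-by-abelian polycyclic groups, but it must be formulated carefully in terms of the $\R$-linearization of the action, and it is the one place where the specific form of the OT twisting $\phi$ enters the argument.
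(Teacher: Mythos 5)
Your proof follows essentially the same strategy as the paper's: assume a nilpotent sublattice $\Lambda'\leq\Gamma$ exists, compute iterated commutators of $(a,x)\in\Lambda'$ against elements of $\Lambda'\cap\mathcal{O}$ to get $(0,(\phi(a)-\Id)^k y)$, use nilpotency to force $(\phi(a)-\Id)^k=0$, and use the diagonality of $\phi(a)$ (the eigenvalues $e^{a_i}$ on the $\R^s$-factor) to conclude $a=0$ and hence $\Lambda'\subseteq\mathcal{O}$. The only substantive difference is the final contradiction: you compare Hirsch ranks ($2s+2t$ versus $s+2t$), while the paper invokes the Mostow Structure Theorem to note that $N\Lambda'\backslash G=N\backslash G\cong\R^s$ cannot be a torus. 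Both work; the Hirsch-rank argument is arguably more self-contained.

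One step is incorrectly justified, though fortunately dispensable: the claim that $1\to B\to\Lambda'\to A\to 1$ splits ``because $A$ is free abelian'' is false in general. Free abelian groups of rank $\geq 2$ are not free as groups, and extensions by them need not split --- the integral Heisenberg group as a central extension $1\to\Z\to H\to\Z^2\to 1$ is the standard counterexample. You do not need the splitting: the commutator identity $[(a,x),(0,y)]=(0,(\phi(a)-\Id)y)$ holds directly inside $\Gamma=U\ltimes_\phi\mathcal{O}$ for any $(a,x)\in\Lambda'$ and $(0,y)\in\Lambda'\cap\mathcal{O}$ (which is normal in $\Lambda'$ since $\mathcal{O}$ is normal in $\Gamma$), so the unipotency conclusion follows without realizing $\Lambda'$ as a semidirect product. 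I recommend deleting the splitting claim and running the commutator computation in $\Gamma$ itself, exactly as the paper does.
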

\begin{proof}
    First observe that $N :=\R^s \times \mathbb{C}^t$ is the nilradical of $G$. Now, suppose that there exists a $\Delta \leq U \ltimes_\phi \mathcal{O}$ nilpotent sublattice. Notice that $\Delta \cap \mathcal{O}$ is a lattice of $N$ by the Mostow Structure Theorem, hence $\Delta \cap \mathcal{O} \neq \{0\}$. Given $(a,x) \in G$, we know that $(a,x)^{-1}= (-a, -\phi(a)^{-1}x)$. For a fixed $(a,x) \in G$, define $I(b,y) := (a,x)(b,y)(a,x)^{-1}(b,y)^{-1} = (0, (\phi(a)-\Id)y - (\phi(b)-\Id)x)$. Thus, by considering $(a, x) \in \Delta$ and $(0, y) \in \Delta \cap \mathcal{O}$, we get $I^{n}(0,y) = (0, (\phi(a)-\Id)^n y)$. Using the fact that the central series of $\Delta$ has finite length, we obtain that there exists a $n \in \N$ such that $(\phi(a) - \Id)^n = 0$ for all $a$ in the projection of $\Delta$ on $U$. However, $\phi(a)$ is a diagonal matrix, hence such $n$ can only exists if $a = 0$. This means that $\Delta \leq \mathcal{O} \leq N$. By Mostow Structure Theorem $(N\Delta) \backslash G$ has to be a torus, which is impossible since $\Delta$ does not act on $\R^s$. 
\end{proof}

\section{The case LCK with potential}

Let $M$ be an LCK manifold with LCK structure $(J, \omega, \theta)$. Defining $\dtheta := d - \theta\wedge$, if there exists a positive smooth function $\psi : M \to \R^+$ such that $\omega = \dtheta \dtheta^c \psi$ we call $(J, \omega, \theta)$ an LCK structure with potential, where $\dtheta^c := J \dtheta J^{-1}$. The Vaisman structure is a particular case by considering the constant function $\psi = 1$. These structures are abundant in a sense, since they are stable under small deformations \cite{OV3}. Therefore, any small deformation of a Vaisman manifold gives an example of LCK structure with potential. 

In the proof of Theorem \ref{MainTheorem}, the LCK structure was invoked only to obtain a covering of the manifold with good properties. In fact, if $M$ admits a finite covering as in Theorem \ref{Bazzoni}, then the results of Theorem \ref{MainTheorem} are still true regardless if $M$ admits a structure $(J,\omega, \theta)$. Therefore, by deforming the initial complex structure $J$ we can actually extend the main result to LCK structures with potential. This can be done for $\dim M \geq 6$ due to a result of Ornea and Verbitsky in \cite{OV2}. In dimension 4, we check directly using the classification due to Hasegawa below.

\begin{Theo}[\cite{Hasegawa1}]

    Let $(M, J)$ be a complex manifold of real dimension $4$. Then, $M$ is diffeomorphic to a solvmanifold if, and only if, $(M ,J)$ is biholomorphic to one of the following classes of complex surfaces:
    \begin{enumerate}
        \item Complex tori;
        \item Hyperelliptic surfaces;
        \item Inoue surfaces of type $S_0$, $S_{+}$ and $S_-$;
        \item Primary Kodaira surfaces;
        \item Secondary Kodaira surfaces.
    \end{enumerate}
    Each of these complex surfaces is equipped with a left-invariant complex structure as a solvmanifold. In other words, every complex structure in these classes arise from a canonically defined left-invariant complex structure in the associated Lie group.
\end{Theo}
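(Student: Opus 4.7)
The plan is to attack Hasegawa's classification via the Enriques–Kodaira classification of compact complex surfaces, filtering by the topological constraints forced by being diffeomorphic to a solvmanifold. A solvmanifold of real dimension $4$ is aspherical with universal cover $\R^4$, has torsion-free polycyclic fundamental group, and Euler characteristic zero. So the first step is to run through every class in Enriques–Kodaira and discard those that cannot satisfy all three conditions simultaneously. K3 surfaces, Enriques surfaces, rational surfaces and Hopf surfaces fail asphericity (their $\pi_2$ is nontrivial or they are simply connected); minimal surfaces of general type are ruled out by $c_1^2 > 0$ together with the Bogomolov–Miyaoka–Yau inequality conflicting with $\chi = 0$; ruled surfaces over a curve of genus $\geq 2$ have non-solvable fundamental groups; properly elliptic surfaces over a base of genus $\geq 2$ are excluded similarly. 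What survives are the classes listed: complex tori, hyperelliptic surfaces, primary and secondary Kodaira surfaces, and the class $\mathrm{VII}_0$ surfaces with $b_2 = 0$ that are not Hopf, which by Bombieri–Inoue classification are exactly the Inoue surfaces $S_0$, $S_\pm$.

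For the converse direction, I would exhibit each class explicitly as $\Gamma \backslash G$ with a left-invariant complex structure. A complex torus is $\Lambda \backslash \mathbb{C}^2$ with $G = \mathbb{C}^2$ abelian; a hyperelliptic surface is a finite quotient of a complex torus, so by passing to the covering torus it is a solvmanifold of the same abelian $G$ with a larger lattice. Primary Kodaira surfaces are nilmanifolds for $G = \R \times H(1,1)$, and secondary Kodaira surfaces are, as already recalled in the paper, finite quotients of primary ones, realized by the oscillator groups $G_{a}$ with $a \in \Z\setminus\{0\}$. For Inoue surfaces $S_0$, $S_\pm$, one writes them as $\Gamma \backslash G$ with $G = \R \ltimes_\varphi \R^3$ for suitable actions $\varphi$ coming from the matrices used in the original construction of Inoue, with the left-invariant complex structure obtained by complexifying in the standard way.

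The delicate half is the biholomorphism statement: that every complex structure on the underlying smooth manifold of one of these five types is the left-invariant one on $G$. For complex tori, hyperelliptic, and Kodaira surfaces this is Kodaira's own classification (every surface with the prescribed numerical invariants lies in the stated biholomorphism class, and within that class the Kuranishi/Kodaira–Spencer deformations are all themselves left-invariant complex structures on the same Lie group). For Inoue surfaces, the crucial input is Bombieri–Inoue's rigidity result that any surface in class $\mathrm{VII}_0$ with $b_2 = 0$ and infinite fundamental group is biholomorphic to one of the Inoue models, and the explicit description of its Kuranishi space shows every deformation stays left-invariant on the associated solvable Lie group.

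The main obstacle is precisely this last step: identifying every complex structure, not merely up to diffeomorphism but up to biholomorphism, with a left-invariant one. In the Kodaira and Inoue cases this draws on nontrivial surface-classification machinery (Bombieri–Inoue rigidity, the global Torelli-type arguments for class $\mathrm{VII}_0$ with $b_2=0$, and the detailed study of deformation spaces), which I would cite rather than reprove; the remainder of the argument, essentially ruling things out by topology and writing down explicit left-invariant models, is the organizational bookkeeping around this hard core.
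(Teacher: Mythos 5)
This statement is quoted from Hasegawa's classification paper \cite{Hasegawa1}; the present paper gives no proof of it, so there is nothing internal to compare your argument against. Your outline does follow the standard (and essentially Hasegawa's) strategy: use the topological constraints coming from the solvmanifold structure ($\chi = 0$, asphericity, torsion-free polycyclic $\pi_1$) to prune the Enriques--Kodaira table, then match each surviving class to an explicit left-invariant model. Two points deserve tightening. First, a small factual slip: primary Hopf surfaces are diffeomorphic to $S^1 \times S^3$, which has $\pi_2 = 0$; they fail asphericity because $\pi_3 \neq 0$, not for the reasons in your parenthetical. You should also note explicitly that non-minimal surfaces are excluded (a $(-1)$-curve forces $\pi_2 \neq 0$) and that the elliptic case with multiple fibers needs the orbifold-base argument to rule out Kodaira dimension one. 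Second, and more substantively, the ``delicate half'' should not lean on Kuranishi spaces: knowing the deformation space of one left-invariant structure only controls complex structures near it, not an arbitrary complex structure on the same smooth manifold. The correct mechanism is that the diffeomorphism type pins down the numerical invariants, which place the surface in a class that is \emph{completely classified up to biholomorphism} (Kodaira for tori, hyperelliptic and Kodaira surfaces; Inoue together with the Bogomolov/Li--Yau--Zheng/Teleman classification of class $\mathrm{VII}_0$ surfaces with $b_2 = 0$ for the Inoue case), and every model in that classification is manifestly a left-invariant structure on the relevant solvable group. You do invoke these classifications, so the essential content is present, but the Kuranishi remark is a red herring and should be dropped in favor of the global classification statement.
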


Therefore, we only need to verify the existence of LCK structures with potential in each of these complex surfaces. We collect the following important results:

\begin{itemize}
    \item Complex tori and Hyperelliptic surfaces admits a K\"ahler structure, thus cannot have a non-GCK structure.

    \item Primary Kodaira surfaces are Kodaira-Thurston nilmanifolds. In particular, it admits a Vaisman structure.
    
    \item Secondary Kodaira surfaces are finite quotients of Primary Kodaira surfaces. In particular, they admit a Vaisman structure.

    \item The Inoue surfaces of types $S_0$, $S_{+}$ and $S_-$ do not admit LCK structures with potential. \cite[Corollary 4.13]{Otiman}
\end{itemize}
We emphasize that given any of these complex surfaces $(M, J)$ above, the complex structure uniquely defines the surface inside its class. This means that if one tries to endow $M$ with another complex structure $J'$, since $M$ still has the same diffeomorphism type, $(M, J')$ is still in the same class. In particular, $J'$ is also left-invariant. This ensures us that none of the complex surfaces above admits an LCK with potential $(J, \omega, \theta)$ for a different $J$ than the initial one.

\begin{Cor}
    Let $M = \Gamma \backslash G$ be a solvmanifold. If $M$ admits an LCK structure with potential $(J, \omega, \theta)$, 
    then $M$ is diffeomorphic to a finite quotient of a Kodaira-Thurston manifold $N = (\Z \times \Lambda)\backslash(\R \times H_{2n+1})$, for some $\Lambda \leq \Gamma$. 

    More precisely, we have a smooth finite covering $p : N \to M$ in the following diagram.
\[
\begin{tikzcd}[ampersand replacement=\&]
G \arrow[rd] \arrow[rdd, "\pi"', bend right] \&                                               \& {\mathbb{R}\times H_{2n+1}} \arrow[ld] \\
                                             \&  N \arrow[d, "p"] \&                                           \\
                                             \& \Gamma \backslash G                                    \&                                          
\end{tikzcd}
\]
    
\end{Cor}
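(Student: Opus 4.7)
The plan is to reduce the statement to Theorem \ref{MainTheorem} by producing a Vaisman structure on the smooth manifold underlying $M$. Indeed, the proof of Theorem \ref{MainTheorem} only used the Vaisman hypothesis through Theorem \ref{Bazzoni}, to obtain a finite covering $S^1\times S\to M$ with $S$ compact Sasakian; the remaining steps are purely topological and smooth, relying on $M$ being an aspherical solvmanifold with strongly polycyclic fundamental group. It therefore suffices to show that, under the weaker hypothesis of admitting an LCK structure with potential, the underlying smooth solvmanifold $M$ still carries some (possibly different) Vaisman structure. The covering diagram involves only $G$, $\Gamma$, and $\R\times H_{2n+1}$, none of which depend on the chosen complex structure on $M$, so Theorem \ref{MainTheorem} then transports verbatim.

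For $\dim_\R M \geq 6$, I would appeal to the deformation stability result of Ornea and Verbitsky in \cite{OV2}: any compact LCK manifold with potential admits an arbitrarily small deformation of the complex structure that is Vaisman. Since such a deformation preserves the underlying smooth manifold $M$ together with its solvmanifold structure $\Gamma\backslash G$, Theorem \ref{MainTheorem} applies to the deformed structure and produces the required finite smooth covering $p : N\to M$.

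For $\dim_\R M = 4$, I would argue case by case using Hasegawa's classification recalled above. Complex tori and hyperelliptic surfaces admit K\"ahler metrics, so by Vaisman's theorem they cannot carry any non-GCK LCK structure, and in particular no LCK structure with potential under our running convention. Inoue surfaces of types $S_0$, $S_+$, $S_-$ do not admit any LCK structure with potential by \cite[Corollary 4.13]{Otiman}, so the hypothesis is vacuous in those cases. The only remaining possibilities are primary and secondary Kodaira surfaces: the primary ones are Kodaira-Thurston manifolds, and the secondary ones are finite quotients thereof, so the conclusion holds tautologically.

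The only subtle point is that the deformation argument in the higher dimensional case must preserve the solvmanifold structure, not merely the diffeomorphism type; but this is automatic, since deforming only $J$ leaves both $G$ and $\Gamma$ untouched. Thus the commutative diagram in the statement is inherited directly from Theorem \ref{MainTheorem} applied to the Vaisman deformation, and no additional computation is required.
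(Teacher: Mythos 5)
Your proposal is correct and follows essentially the same route as the paper: for $\dim_\R M \geq 6$ it invokes the Ornea--Verbitsky deformation of an LCK structure with potential to a Vaisman one and then applies Theorem \ref{MainTheorem}, and for $\dim_\R M = 4$ it runs the same case analysis through Hasegawa's classification of complex surfaces diffeomorphic to solvmanifolds. No substantive differences from the paper's argument.
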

\begin{proof}
    Suppose $\dim M \geq 6$. Since $M$ admits an LCK structure with potential $(J, \omega, \theta)$, by \cite[Theorem 7.3]{OV2} $M$ admits another complex structure $J'$ such that there exists a Vaisman structure $(J',\omega',\theta')$ in $M$. Hence, the conclusion of theorem \ref{MainTheorem} follows.

    For $\dim M = 4$, we consider the classification above on complex surfaces diffeomorphic to solvmanifolds. By the results collected above, we saw that the only possible complex surfaces that admit an LCK metric with potential are the primary and secondary Kodaira surfaces, which satisfy the theorem trivially.
\end{proof}

\section{Application to Einstein-Weyl LCK case}

For the final application of Theorem \ref{MainTheorem}, we recall the following two results.

\begin{Theo}[\cite{BensonGordon, Hasegawa2}]\label{Hasegawa}
    Let $M$ be nilmanifold. If $M$ admits a K\"ahlerian structure, then $M$ is diffeomorphic to a torus.
\end{Theo}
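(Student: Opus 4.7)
The plan is to combine two classical rigidity results from rational homotopy theory with the explicit algebraic model of a nilmanifold's rational type, reducing the geometric statement to a purely algebraic one about nilpotent Lie algebras. Two approaches are available in the literature, due to Benson--Gordon (via Hard Lefschetz) and Hasegawa (via formality); I would follow the formality route since it applies uniformly in every dimension and every nilpotency class.

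First I would invoke Nomizu's theorem: for a nilmanifold $M = \Gamma\backslash G$ with associated Lie algebra $\g$, the inclusion of left-invariant forms into $\Omega^*(M)$ induces an isomorphism $H^*(\g, \R) \xrightarrow{\sim} H^*(M, \R)$, and in fact the Chevalley--Eilenberg DGA $(\Lambda^* \g^*, d)$ is a Sullivan minimal model of $M$. Next I would invoke the Deligne--Griffiths--Morgan--Sullivan formality theorem: every compact K\"ahler manifold is formal, i.e., its minimal model is quasi-isomorphic as a DGA to its cohomology algebra equipped with zero differential. Combining these two facts reduces the theorem to the algebraic claim that any nilpotent Lie algebra $\g$ whose Chevalley--Eilenberg complex is formal must be abelian.

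The main obstacle is this algebraic claim. I would argue by contradiction: suppose $\g$ is nilpotent but not abelian, choose a basis $\{X_1, \dots, X_n\}$ adapted to the lower central series, and locate a nonzero central element $Z = [X, Y]$ lying in the deepest nonzero step of that filtration. The dual cocycles $X^*, Y^*$ are then closed, while $dZ^*$ contains the term $-X^* \wedge Y^*$; this is precisely the data required to produce a nontrivial triple Massey product in $H^*(\g, \R)$, which any formal minimal model would be forced to kill. The delicate part is selecting auxiliary classes to complete the Massey triple and verifying nontriviality modulo its indeterminacy subspace. A viable alternative is the Benson--Gordon route: take the K\"ahler form (which Nomizu allows to be chosen left-invariant by symmetrization) and show directly that the Hard Lefschetz property $[\omega]^{n-1}: H^1(\g) \to H^{2n-1}(\g)$ must fail whenever $\g$ is non-abelian, by exploiting the central generators that force an imbalance between $b_1$ and $b_{2n-1}$ in the Chevalley--Eilenberg complex.

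Once the algebraic claim is secured, $\g$ is abelian, the simply connected group $G$ is $\R^n$, and $M = \Gamma\backslash\R^n$ is a torus, completing the argument.
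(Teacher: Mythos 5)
This theorem is quoted in the paper from \cite{BensonGordon, Hasegawa2} without proof, so there is no in-paper argument to compare against; your proposal must be judged on its own. Your high-level strategy is the correct one from the literature: Nomizu's theorem identifies the Chevalley--Eilenberg algebra $(\Lambda\g^*,d)$ as the Sullivan minimal model of the nilmanifold (minimality follows precisely from nilpotency of $\g$), Deligne--Griffiths--Morgan--Sullivan gives formality of compact K\"ahler manifolds, and the problem reduces to showing that a nilpotent Lie algebra with formal Chevalley--Eilenberg complex is abelian.

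The gap is in how you propose to establish that algebraic claim. Producing a single nontrivial triple Massey product from a central bracket $Z=[X,Y]$ is exactly the step you call ``delicate,'' and it is where the argument breaks: non-formality is in general not detected by triple Massey products (there are non-formal spaces, including nilmanifolds, whose triple Massey products all vanish or are swallowed by the indeterminacy), so this route does not close for an arbitrary non-abelian nilpotent $\g$. Hasegawa's actual proof avoids Massey products entirely and uses the DGMS structural characterization of formality: split the degree-one generators as $V^1=C\oplus N$ with $C=\ker d\cap V^1$, note that $N\neq 0$ when $\g$ is non-abelian, and observe that the top-degree form $x_1\wedge\cdots\wedge x_m\wedge y_1\wedge\cdots\wedge y_k$ (a product of a basis of $C$ and of $N$) is a closed, non-exact element of the ideal generated by $N$ --- it represents the fundamental class --- which formality forbids. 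Your fallback via Benson--Gordon is also stated imprecisely: Poincar\'e duality forces $b_1=b_{2n-1}$ for any closed orientable manifold, so there is no ``imbalance'' of Betti numbers; what fails is the bijectivity of the specific Lefschetz map $[\omega]^{n-1}\colon H^1(\g)\to H^{2n-1}(\g)$, which Benson and Gordon rule out using the lower central series, not a Betti number count. Either route can be completed, but as written the decisive step is missing in both.
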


\begin{Theo}[\cite{Nomizu}]\label{Nomizu}
    Let $M = \Gamma \backslash G$ be a nilmanifold. Then $H^p(M) = H^p(\mathfrak{g})$ for all $p > 0$, where $\mathfrak{g}$ is the Lie algebra of $G$.
\end{Theo}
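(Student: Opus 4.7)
The plan is to prove Nomizu's isomorphism by showing the inclusion of left-invariant forms into all de Rham forms on $M$ is a quasi-isomorphism, proceeding by induction on $\dim G$ via central extensions. Since $G$ is simply connected nilpotent, left-invariant forms on $G$ are automatically $\Gamma$-invariant and descend to $M$; the resulting subcomplex $\Omega^*_G(M) \subset \Omega^*(M)$ is canonically identified with the Chevalley--Eilenberg complex $(\Lambda^*\g^*, d_{CE})$ whose cohomology is by definition $H^*(\g)$. Thus the statement to prove is that the inclusion $(\Lambda^*\g^*, d_{CE}) \hookrightarrow (\Omega^*(M), d)$ induces an isomorphism on cohomology.

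The base case is $G = \R^n$, $M = T^n$, where the claim reduces to the classical de Rham computation for the torus, $H^*(T^n) \cong \Lambda^*(\R^n)^*$. For the inductive step, any nontrivial simply connected nilpotent Lie group has a nontrivial connected central subgroup $Z \cong \R$; by a standard Malcev--Mostow argument $\Gamma \cap Z$ is a lattice in $Z$ and the image of $\Gamma$ in $G/Z$ is a lattice there. This produces a principal $S^1$-bundle $S^1 \to M \xrightarrow{\pi} M'$ over the lower-dimensional nilmanifold $M'$ associated to $G/Z$, paired on the algebraic side with the central extension of Lie algebras $0 \to \R \to \g \to \g/\mathfrak{z} \to 0$.

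I would then compare the Leray--Serre spectral sequence of the fiber bundle with the Hochschild--Serre spectral sequence of the Lie algebra central extension. Both have $E_2$-pages of the form (base cohomology) $\otimes$ (fiber cohomology): topologically $H^p(M') \otimes H^q(S^1)$, algebraically $H^p(\g/\mathfrak{z}) \otimes H^q(\R)$. The inclusion of invariant forms defines a morphism of spectral sequences which, by the inductive hypothesis applied to $M'$ together with the identification $H^*(S^1) \cong H^*(\R)$, is an isomorphism on $E_2$.

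The main obstacle is the compatibility of differentials, which essentially reduces to a single transgression on $E_2$. One must identify the Euler class of the circle bundle $\pi$ in $H^2(M')$ with the class in $H^2(\g/\mathfrak{z})$ classifying the central extension, matched through the already-established Nomizu isomorphism for $M'$. Once this compatibility is verified, the two spectral sequences coincide from $E_2$ onward, so the induced map on abutments $H^*(M) \cong H^*(\g)$ is an isomorphism and the induction closes.
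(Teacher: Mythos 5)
The paper does not prove this statement: it is quoted as Nomizu's classical theorem with a citation to \cite{Nomizu}, so there is no internal proof to compare against. Your argument is the standard inductive proof found in most expositions: realize the nilmanifold as an iterated principal circle bundle and compare the Leray--Serre spectral sequence of the bundle with the Hochschild--Serre spectral sequence of the corresponding central extension of Lie algebras. The skeleton is sound, but two points deserve more care. First, you cannot take an arbitrary one-dimensional connected central subgroup $Z$ and then claim $\Gamma\cap Z$ is a lattice: for $G=\R^2$, $\Gamma=\Z^2$ and $Z$ a line of irrational slope, $\Gamma\cap Z$ is trivial. You must choose $Z$ rationally with respect to $\Gamma$, e.g.\ as the one-parameter subgroup through a primitive element of $\Gamma\cap Z(G)$, which is a nontrivial lattice in the center by Malcev theory; only then do you get a lattice in $Z$ and an induced lattice in $G/Z$, so that $M'$ is a genuine nilmanifold to which the inductive hypothesis applies. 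Second, the transgression compatibility that you flag as ``the main obstacle'' is the actual heart of the proof and should be carried out rather than deferred: take the left-invariant $1$-form $\alpha$ dual to the chosen central direction, normalized to integrate to $1$ over the fiber; it descends to a connection form on the principal bundle $M\to M'$, and $d\alpha$ is the pullback of the left-invariant $2$-form on $M'$ given by the Chevalley--Eilenberg cocycle of the extension $0\to\R\to\g\to\g/\mathfrak{z}\to 0$. By Chern--Weil this curvature form represents the Euler class, which identifies the two $d_2$ differentials under the already-established isomorphism for $M'$. With these two points supplied, the comparison of first-quadrant spectral sequences closes the induction as you describe.
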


\begin{Prop}
    Let $M$ be a solvmanifold. Then, $M$ does not admit any Einstein-Weyl LCK structure.
\end{Prop}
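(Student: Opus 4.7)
The plan is to pass to the Kodaira--Thurston finite cover provided by Theorem \ref{MainTheorem} and derive a contradiction from Verbitsky's $b_1 = 1$ restriction. Suppose, for contradiction, that $M = \Gamma\backslash G$ admits an Einstein--Weyl LCK structure. By Corollary \ref{Gauduchon}, the associated Gauduchon metric is automatically Vaisman, so $M$ is in particular a Vaisman solvmanifold, and Theorem \ref{MainTheorem} furnishes a finite smooth covering $p : N \to M$ where $N = (\mathbb{Z}\times\Lambda)\backslash(\mathbb{R}\times H_{2n+1})$ is a Kodaira--Thurston manifold of the same dimension as $M$.

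Next I would pull the Einstein--Weyl LCK structure back along $p$. Because $p$ is a local diffeomorphism and the Einstein--Weyl LCK condition is a pointwise requirement on the Hermitian metric, the Lee form, and the induced Weyl connection, the pulled-back triple $(p^*J, p^*\omega, p^*\theta)$ is again Einstein--Weyl LCK on $N$. In order to invoke Theorem \ref{Verb} I must check that $N$ is not K\"ahler; but $N$ is a nilmanifold whose associated Lie group $\mathbb{R}\times H(1,n)$ is non-abelian, so $N$ is not diffeomorphic to a torus, and Theorem \ref{Hasegawa} then forbids any K\"ahler structure on $N$ whatsoever.

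Applying Theorem \ref{Verb} therefore forces $b_1(N) = 1$. Against this, Nomizu's Theorem \ref{Nomizu} computes $b_1(N) = \dim(\mathfrak{g}/[\mathfrak{g},\mathfrak{g}])$ for $\mathfrak{g} = \mathbb{R}\oplus\mathfrak{h}(1,n)$; since $[\mathfrak{g},\mathfrak{g}]$ equals the one-dimensional center $\mathbb{R}Z$, one obtains $b_1(N) = 2n+1$. The LCK hypothesis $\dim M \geq 4$ forces $n \geq 1$, so $b_1(N) \geq 3$, which contradicts the previous line. The only genuinely delicate step I anticipate is the correct interpretation of Verbitsky's ``not K\"ahler'' hypothesis as applied to the cover $N$, but this reduces immediately to the diffeomorphism-type obstruction of Theorem \ref{Hasegawa} once the non-abelian nature of $\pi_1(N)$ is noted; all remaining checks (that the Einstein--Weyl LCK data pull back tensorially, and that the Gauduchon gauge is Vaisman under Einstein--Weyl) are direct applications of the results collected in Section 2.
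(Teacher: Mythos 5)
Your proposal is correct and follows essentially the same route as the paper's proof: pass to the Gauduchon (hence Vaisman) metric, invoke Theorem \ref{MainTheorem} to get the Kodaira--Thurston finite cover $N$, pull back the structure, rule out a K\"ahler structure on $N$ via Theorem \ref{Hasegawa} and the non-abelian fundamental group, and contradict Theorem \ref{Verb} with Nomizu's computation $b_1(N)=2n+1$. The only cosmetic difference is that you argue the non-K\"ahler step by contrapositive where the paper argues by contradiction through the Rigidity Theorem; the content is identical.
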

\begin{proof}
    Suppose $(J, g_0, \theta)$ is an Einstein-Weyl LCK structure on $M$. Then, by considering the Gauduchon metric $g$, $(M, J, g)$ is Vaisman by Corollary \ref{Gauduchon}. Applying the main theorem we obtain that there exists a nilmanifold $N$ from $\R \times H(1,n)$ which is a finite covering of $M$. We can lift the structure $(J, g, \theta)$ to $N$ obtaining an Einstein-Weyl LCK structure on $N$. Since Vaisman is a local property, $(N, J, g)$ is also a compact Vaisman manifold.

    Now, if $N$ is a K\"ahler manifold, by Theorem \ref{Hasegawa}, we have that $N$ is diffeomorphic to a torus. But then, by the Rigidity Theorem, this would imply that $\R \times H(1,n)$ is isomorphic as a Lie group to $\R^n$, which is impossible. Hence $N$ is not K\"ahler. Finally, we can apply Theorem \ref{Verb} to obtain that $b_1 (N) = 1$, however by Theorem \ref{Nomizu} $H^1(N) = H^1(\R \times \mathfrak{h})$ which implies that $b_1(N) = 2n + 1$, contradiction, since $n \geq 1$. Thus, $(J, g_0, \theta)$ cannot be Einstein-Weyl.
\end{proof}

\printbibliography
\end{document}